\def\version{03/07/2018 -- version 4
\quad
\href{https://arxiv.org/abs/1710.02974}{arXiv:1710.02974};
published version to appear in \emph{Homology,
Homotopy and Applications}}
\def\PO{\text{\pigpenfont R}}
\def\PB{\text{\pigpenfont J}}
\renewcommand{\thefootnote}{\fnsymbol{footnote}}
\long\def\symbolfootnote[#1]#2{\begingroup%
\def\thefootnote{\fnsymbol{footnote}}\footnote[#1]{#2}\endgroup}
\def\baselinestretch{1.20}
\newtheorem{thm}{Theorem}[section]
\newtheorem{lem}[thm]{Lemma}
\newtheorem{cor}[thm]{Corollary}
\theoremstyle{definition}
\newtheorem{rem}[thm]{Remark}
\newtheorem*{rem*}{Remark}
\newtheorem{examps}[thm]{Examples}
\numberwithin{equation}{section}
\def\cf{\emph{cf.}}
\def\eg{\emph{e.g.}}
\def\ie{\emph{i.e.}}
\def\ds{\displaystyle}
\def\:{\colon}
\def\.{\cdot}
\def\o{\circ}
\def\<{\left\langle}
\def\>{\right\rangle}
\def\({\left(}
\def\){\right)}
\def\ph#1{\phantom{#1}}
\def\epsilon{\varepsilon}
\def\phi{\varphi}
\def\emptyset{\varnothing}
\def\subset{\subseteq}
\def\supset{\supseteq}
\def\leq{\leqslant}
\def\geq{\geqslant}
\def\lla{\longleftarrow}
\def\la{\leftarrow}
\def\lra{\longrightarrow}
\def\Lra{\Longrightarrow}
\def\LRA{\quad\Lra\quad}
\def\ra{\rightarrow}
\def\bar#1{\overline{#1}}
\def\hat#1{\widehat{#1}}
\def\tilde#1{\widetilde{#1}}
\def\iso{\cong}
\def\homeq{\simeq}
\DeclareMathOperator{\aug}{\epsilon}
\DeclareMathOperator{\Char}{char}
\DeclareMathOperator{\id}{id}
\DeclareMathOperator{\Id}{Id}
\DeclareMathOperator{\codom}{codom}
\DeclareMathOperator{\dom}{dom}
\DeclareMathOperator{\im}{im}
\DeclareMathOperator{\rank}{rank}
\DeclareMathOperator{\coker}{coker}
\def\B{\mathbb{B}}
\def\C{\mathbb{C}}
\def\CP{\mathbb{C}\mathrm{P}}
\def\CPi{\CP^\infty}
\def\D{\mathbb{D}}
\def\E{\mathrm{E}}
\def\F{\mathbb{F}}
\def\H{\mathbb{H}}
\def\HP{\mathbb{H}\mathrm{P}}
\def\HPi{\HP^\infty}
\def\k{\Bbbk}
\def\m{\mathfrak{m}}
\def\N{\mathbb{N}}
\def\Q{\mathbb{Q}}
\def\R{\mathbb{R}}
\def\RP{\mathbb{R}\mathrm{P}}
\def\RPi{\RP^\infty}
\def\Z{\mathbb{Z}}
\def\ideal{\triangleleft}
\DeclareMathOperator{\codim}{codim}
\DeclareMathOperator{\bideg}{bideg}
\DeclareMathOperator*{\filt}{filt}
\DeclareMathOperator{\inc}{inc}
\DeclareMathOperator{\mult}{mult}
\DeclareMathOperator{\Aut}{Aut}
\DeclareMathOperator{\cone}{C}
\DeclareMathOperator{\Comod}{Comod}
\DeclareMathOperator{\Mod}{Mod}
\DeclareMathOperator{\End}{End}
\DeclareMathOperator{\Ext}{Ext}
\DeclareMathOperator{\Gal}{Gal}
\DeclareMathOperator{\HAQ}{HAQ}
\DeclareMathOperator{\Har}{Har}
\DeclareMathOperator{\Hom}{Hom}
\DeclareMathOperator{\indet}{indet}
\DeclareMathOperator{\Kumm}{Kumm}
\DeclareMathOperator{\Map}{Map}
\DeclareMathOperator{\ord}{ord}
\DeclareMathOperator{\Pic}{Pic}
\DeclareMathOperator{\TAQ}{TAQ}
\DeclareMathOperator{\Tor}{Tor}
\DeclareMathOperator{\Tr}{Tr}
\DeclareMathOperator*{\colim}{colim}
\DeclareMathOperator*{\hocolim}{hocolim}
\DeclareMathOperator*{\holim}{holim}
\def\del{\partial}
\DeclareMathOperator{\ch}{ch}
\def\O{\mathrm{O}}
\def\SO{\mathrm{SO}}
\def\Sp{\mathrm{Sp}}
\def\Spin{\mathrm{Spin}}
\def\Spinc{\Spin^{\mathrm{c}}}
\def\SU{\mathrm{SU}}
\def\String{\mathrm{String}}
\def\Stringc{\mathrm{String}^{\mathrm{c}}}
\def\U{\mathrm{U}}
\def\bp{\mathrm{bp}}
\def\tmf{\mathrm{tmf}}
\DeclareMathOperator{\ann}{ann}
\DeclareMathOperator{\exc}{exc}
\DeclareMathOperator{\len}{length}
\def\ev{\mathrm{ev}}
\def\cof{\mathrm{c}}
\def\undervee#1{\overset{#1}\vee}
\DeclareMathOperator{\cofib}{cofib}
\DeclareMathOperator{\fib}{fib}
\DeclareMathOperator{\Sq}{Sq}
\DeclareMathOperator{\q}{q}
\def\dlQ{\mathrm{Q}}
\def\hdlQ{\hat{\dlQ}}
\def\tdlQ{\tilde{\dlQ}}
\DeclareMathOperator{\quo}{\pi}
\def\tpsi{\tilde{\psi}}
\def\Ainfty{$\mathcal{A}_\infty$ }
\def\Einfty{$\mathcal{E}_\infty$ }
\def\jc{j^\mathrm{c}}
\DeclareMathOperator{\GL}{GL}
\DeclareMathOperator{\SL}{SL}
\def\os#1{{\underline{#1}}}
\def\botimes{\overline{\otimes}}
\DeclareMathOperator{\CTor}{CTor}
\def\Coalg{\mathbf{Coalg}}
\def\Coalgc{\Coalg^{\mathrm{c}}}
\def\AbGp{\mathbf{AbGp}}
\def\Bialg{\mathbf{BiAlg}}
\def\Bialgo{\mathbf{BiAlg}^0}
\def\HAlg{\mathbf{HopfAlg}}
\def\HAlgc{\HAlg^{\mathrm{c}}}
\def\HRngc{\HRng^{\mathrm{c}}}
\def\HRng{\mathbf{HopfRing}}
\def\Top{\mathbf{Top}}
\def\Vect{\mathbf{Vect}}
\DeclareMathOperator{\lin}{lin}
\DeclareMathOperator{\Prim}{\mathsf{Pr}}
\DeclareMathOperator{\Sym}{\mathsf{Sym}}
\DeclareMathOperator{\FBA}{\mathsf{FBA}}
\DeclareMathOperator{\tFBA}{\widetilde{\mathsf{FBA}}}
\DeclareMathOperator{\FHA}{\mathsf{FHA}}
\def\Set{\mathbf{Set}}
\def\sprod{{\text{\rotatebox[origin=c]{180}{\small$\amalg$}}}}
\def\scoprod{\amalg}
\def\QS0{\dlQ S^0}
\def\QSo0{\dlQ_0S^0}
\def\StA{\mathcal{A}}
\def\StE{\mathcal{E}}
\def\Joker{\mathrm{Joker}}
\def\kO{{k\mathrm{O}}}
\def\Sage{\texttt{Sage}}
\newcommand*\circled[1]{\tikz[baseline=(char.base)]{
  \node[shape=circle,draw,inner sep=.5pt] (char) {#1};}}
\title[Iterated doubles of the Joker and realisability]
{Iterated doubles of the Joker and their realisability}
\author{Andrew Baker
}
\date{\version}
\address{
School of Mathematics \& Statistics,
University of Glasgow, Glasgow G12~8QW, Scotland.}
\email{a.baker@maths.gla.ac.uk}
\urladdr{http://www.maths.gla.ac.uk/$\sim$ajb}
\thanks{
I would like to thank the following for
helpful comments and insights: Bob Bruner
and John Rognes (from whom I learnt an
enormous amount about working with the
Steenrod algebra), Don Davis, Peter Eccles
(who showed  me how to use Toda brackets
to construct complexes efficiently and
so initiated the work described) and
Grant Walker. \\
\emph{The mathematics in this paper owes
much to the insights and inspiration of
\textbf{Michael Barratt} and \textbf{Mark
Mahowald} and I would like to dedicate
it to their memory.}
}
\keywords{Stable homotopy theory, Steenrod algebra.}
\subjclass[2010]{Primary 55P42; Secondary 55S10, 55S20.}
\begin{document}

\begin{abstract}
Let $\mathcal{A}(1)^*$ be the subHopf algebra
of the mod~$2$ Steenrod algebra $\mathcal{A}^*$
generated by $\mathrm{Sq}^1$ and $\mathrm{Sq}^2$.
The \emph{Joker} is the cyclic
$\mathcal{A}(1)^*$-module
$\mathcal{A}(1)^*/\mathcal{A}(1)^*\{\mathrm{Sq}^3\}$
which plays a special r\^ole in the study of
$\mathcal{A}(1)^*$-modules. We discuss realisations
of the Joker both as an $\mathcal{A}^*$-module and
as the cohomology of a spectrum. We also consider
analogous $\mathcal{A}(n)^*$-modules for $n\geq2$
and prove realisability results (both stable and
unstable) for $n=2,3$ and non-realisability results
for $n\geq4$.
\end{abstract}

\maketitle

\section*{Introduction}

The cyclic $\StA(1)^*$-module
$\StA(1)^*/\StA(1)^*\{\Sq^3\}$, commonly known
as the \emph{Joker}, was shown by Adams \&
Priddy~\cite{JFA&SBP} to give rise to a torsion
summand in the Picard group of invertible stable
$\StA(1)^*$-modules.

The reader is warned that there are various
commonly encountered normalisations of the
grading and for topological reasons we use
the one where the lowest degree of a nontrivial
element is~$0$. Here is a representation of
the Joker where a vertical line indicates the
action of~$\Sq^1$ and a curved line indicates
the action of~$\Sq^2$.
\[
\xymatrix{
& \bullet\ar@{-}[d]\ar@/^15pt/@{-}[dd] & \\
& \bullet\ar@/_15pt/@{-}[dd] & \\
& \bullet\ar@/^15pt/@{-}[dd]^{\Sq^2} & \\
& \bullet\ar@{-}[d]_{\Sq^1} & \\
& \bullet & \\
}
\]

More details on the Joker and its homological
algebra can be found in~\cite{RRB&JPCG}*{appendix~A.8}.
For a recent result which highlights the special
significance of the Joker see~\cite{PB&NR:PicA2}.
Incidentally, the use of the name Joker appears
to be due to Frank Adams, although the earliest
published occurrence that we are aware of is
in~\cite{VG&DJP:MSpin}; it may be based on the
similarity of the diagram above to a traditional
jester's hat.

The $\StA(1)^*$-module structure of the Joker
extends in two ways to an $\StA^*$-module
structure determined by whether $\Sq^4$ acts
non-trivially or not between the bottom and
top degrees. The resulting $\StA^*$-modules
are linear duals of each other. We will show
that both can be realised as cohomology of
finite CW spectra which are Spanier-Whitehead
dual using a construction we learnt from Peter
Eccles, however, it also appeared in Mike Hopkins'
Oxford PhD thesis but seems not to be otherwise
published. We will show in Theorem~\ref{thm:Joker-Unstable}
that these can be realised as cohomology of
spaces with bottom cells of degrees~$2$ and~$4$,
respectively.

We introduce higher versions of the Joker
defined as cyclic $\StA(n)^*$-modules and
show that these can be realised as cohomology
of spectra precisely when~$n\leq3$. Most
cases of the non-realisability result can
be verified by a direct application of
Adams' result on Hopf invariant~$1$,
however, in one case we resort to a more
delicate argument using the precise form
of his factorisation of~$\Sq^{2^r}$ for
$r\geq4$, so we give a proof which applies
in all cases. In the cases where we can
realise these modules, our constructions
depend on the existence of triple Toda
brackets containing the first three
elements of Kervaire invariant~$1$, i.e.,
$\eta^2,\nu^2,\sigma^2$. Finally, we
consider unstable realisations and show
that for $n=1,2$ we can indeed realise
optimal unstable versions of the higher
Joker modules; the techniques used
involve modifying naturally occurring
spaces by mapping into Eilenberg-Mac~Lane
spaces and certain spaces in the spectra~$\kO$
and~$\tmf$, thus giving alternatives to
the stable constructions above.

For the convenience of the reader, we include
some brief appendices in which doubling is
discussed and some connectivity results on
infinite loop spaces are given; this material
is standard but we were unable to locate
convenient references. We  also include
some Adams spectral sequence charts
obtained using Bob Bruner's programmes.

\tableofcontents

\bigskip
\section*{Conventions \& notations}

Throughout we work locally at the prime~$2$.

To avoid excessive display of gradings we
will often suppress cohomological degrees
and write~$V$ for a cohomologically graded
vector space~$V^*$; in particular we will
often write~$\StA$ for the Steenrod algebra.
The linear dual of $V$ is $\mathrm{D}V$ where
$(\mathrm{D}V)^k = \Hom_{\F_2}(V^{-k},\F_2)$,
and we write $V[m]$ for graded vector space
with $(V[m])^k=V^{k-m}$, so for the cohomology
of a spectrum~$X$, $H^*(\Sigma^m X)=H^*(X)[m]$.

For a connected graded algebra $\mathcal{B}^*$
we will write $\mathcal{B}^+$ for its positive
degree part.

\section{$\StA$-module structures on the Joker
and duality}\label{sec:Extensions&duality}

The Joker has two possible $\StA$-module structures
corresponding to the choice of action of~$\Sq^4$
between the top and bottom degrees. The resulting
$\StA$-modules $\Joker_0^*$ and $\Joker_1^*$ are
displayed in the following diagrams in which the
shorter edges represent non-trivial $\Sq^1$ and
$\Sq^2$ actions.
\begin{equation}\label{eq:ExtendedJokers}
\xymatrix{
& \bullet\ar@{-}[d]\ar@/^15pt/@{-}[dd]\ar@{.}@/_28pt/@{.}[dddd]_{\Sq^4=0} & \\
& \bullet\ar@/_15pt/@{-}[dd] & \\
& \bullet\ar@/^15pt/@{-}[dd] & \\
& \bullet\ar@{-}[d] & \\
& \bullet & \\
& {\Joker_0} & \\
}
\quad
\xymatrix{
& \bullet\ar@{-}[d]\ar@/^15pt/@{-}[dd]\ar@/_28pt/@{-}[dddd]_{\Sq^4} & \\
& \bullet\ar@/_15pt/@{-}[dd] & \\
& \bullet\ar@/^15pt/@{-}[dd] & \\
& \bullet\ar@{-}[d] & \\
& \bullet & \\
& {\Joker_1} & \\
}
\end{equation}

Recall that for a left $\StA$-module $M$,
the $\F_2$-linear dual $\mathrm{D}M$ is
naturally a \emph{right} $\StA$-module
where for $f\in\mathrm{D}M$, $\theta\in\StA$
and $x\in M$,
\[
(f\.\theta)(x) = f(\theta x).
\]
There is an associated \emph{left} module
structure given by
\[
(\theta\.f)(x) = (f\.\chi\theta)(x) = f(\chi\theta x),
\]
where $\chi\:\StA\to\StA$ is the antipode.
For a finite CW complex spectrum $Z$, as
a left $\StA$-module the cohomology of
the Spanier-Whitehead dual~$DZ$ satisfies
\[
H^*(DZ) \iso \mathrm{D}(H^*(Z)).
\]

Since the following relations hold in~$\StA$,
\begin{align*}
\Sq^3 &= \Sq^1\Sq^2, \\
\Sq^2\Sq^2 &= \Sq^1\Sq^2\Sq^1, \\
\chi\Sq^1 &= \Sq^1, \\
\chi\Sq^2 &= \Sq^2, \\
\chi\Sq^4
&= \Sq^4 + \Sq^1\Sq^2\Sq^1 = \Sq^4 + \Sq^2\Sq^2,
\end{align*}
it follows that $\Joker_0$ and $\Joker_1$
are dual up to a degree shift, i.e.,
\begin{equation}\label{eq:Jokerduality}
\Joker_1 \iso \mathrm{D}\Joker_0[4].
\end{equation}

\section{Doubling and higher versions
of the Joker}\label{sec:HigherVersions}

An account of doubling can be found in
Margolis~\cite{HRM:Book}*{section~15.3},
but since we need to use an iterated
version we give a detailed account in
Appendix~\ref{app:Doubling}.

For each $n\geq2$, iterated doubling gives
a generalisation of the Joker to a cyclic
$\StA(n)$-module $\Joker(n)=\Delta^{(n-1)}(\Joker)$.
The actions of $\Sq^{2^{n-1}}$ and $\Sq^{2^n}$
on $\Joker(n)$ are shown below.
\[
\xymatrix{
& \bullet\ar@/_10pt/@{-}[d]_{\Sq^{2^{n-1}}}\ar@/^15pt/@{-}[dd]^{\Sq^{2^n}} & \\
& \bullet\ar@/_15pt/@{-}[dd]_{\Sq^{2^n}} & \\
& \bullet\ar@/^15pt/@{-}[dd]^{\Sq^{2^n}} & \\
& \bullet\ar@/_10pt/@{-}[d]_{\Sq^{2^{n-1}}} & \\
& \bullet & \\
}
\]

There are two extensions to $\StA$-module
structures, each determined by a choice of
action by $\Sq^{2^{n+1}}$ from the top to
the bottom degree.
\[
\xymatrix{
& \bullet\ar@/_10pt/@{-}[d]\ar@/^15pt/@{-}[dd] & \\
& \bullet\ar@/_15pt/@{-}[dd] & \\
& \bullet\ar@/^15pt/@{-}[dd] & \\
& \bullet\ar@/_10pt/@{-}[d] & \\
& \bullet\ar@{.}@/^38pt/[uuuu]^{\Sq^{2^{n+1}}} & \\
}
\]
We denote these $\StA$-modules by $\Joker(n)_0$
and $\Joker(n)_1$ depending on whether
$\Sq^{2^{n+1}}$ acts trivially or not.
It is straightforward to verify that
the action of $\chi\Sq^{2^{n+1}}$ on
$\Joker(n)_0$ is non-trivial and
\[
\Joker(n)_1 \iso
  \mathrm{D}\Joker(n)_0[2^{n+1}].
\]

As an $\StA(n)$-module, $\Joker(n)$ is finitely
presented. For example, $\Joker(1)$ has minimal
presentation
\[
0\leftarrow \Joker(1)\leftarrow \StA(1)\leftarrow \StA(1)[3],
\]
while
\[
\Joker(2)
=
\StA(2)/\StA(2)\{\mathrm{P}_1^0,\mathrm{P}_2^0,\mathrm{P}_3^0,\Sq^6\}
=
\StA(2)/\StA(2)\{\mathrm{P}_1^0,\mathrm{P}_2^0,\Sq^6\},
\]
so it has a minimal presentation
\[
0\leftarrow \Joker(2)\leftarrow \StA(2)
\leftarrow\StA(2)[1]\oplus\StA(2)[3]\oplus\StA(2)[6].
\]
Finally,
\[
\Joker(3) =
\StA(3)/\StA(3)\{\mathrm{P}_1^0,\mathrm{P}_2^0,\mathrm{P}_3^0,\mathrm{P}_1^1,\mathrm{P}_2^1,\Sq^{12}\}
=
\StA(3)/\StA(3)\{\mathrm{P}_1^0,\mathrm{P}_1^1,\mathrm{P}_2^1,\Sq^{12}\}
\]
and there is a minimal presentation
\[
0\leftarrow \Joker(3)\leftarrow \StA(3)
\leftarrow
\StA(3)[1]\oplus\StA(3)[2]\oplus\StA(3)[6]\oplus\StA(3)[12].
\]
Of course the $\StA$-modules $\Joker(n)_0$
and $\Joker(n)_1$ are not finitely presented.
Such resolutions can be found obtained using
the~\Sage{} package of Mike Catanzaro and
Bob Bruner
\begin{center}
\href{http://www.math.wayne.edu/~mike/mods/}
{http://www.math.wayne.edu/$\sim$mike/mods/}
\end{center}
documented in~\cite{MC:FPMStA}.

\subsection*{Coactions}
As left $\StA_*$-comodules, $\Joker_0$ and
$\Joker_1$ have coactions which satisfy
\begin{align*}
\Joker_0:\quad \psi x_4
&= 1\otimes x_4 + \zeta_1\otimes x_3
   + \zeta_1^2\otimes x_2 + \zeta_2\otimes x_1
   + \zeta_1\xi_2\otimes x_0, \\
\Joker_1:\quad \psi x_4
&= 1\otimes x_4 + \zeta_1\otimes x_3
   + \zeta_1^2\otimes x_2 + \xi_2\otimes x_1
   + \zeta_1\zeta_2\otimes x_0.
\end{align*}
These differ by the term $\zeta_1^4\otimes x_0$
which detects the~$\Sq^4$ action. Similar
formulae can be obtained for the coactions
in the higher cases using the dual Frobenius
action.

\section{Some recollections on Toda brackets}
\label{sec:TodaBrackets}

For ease of reference, we recall some basic
ideas about triple Toda brackets in homotopy
theory. A classic source for the basic ideas
is the book of Mosher \& Tangora~\cite{Mosher&Tangora}
and Toda's seminal work~\cite{Toda} provides
a more exhaustive account, while
Cohen~\cite{JMC:Decomp} gives a different
treatment, also discussed by Whitehead~\cite{GW:Notes}.

Let
\begin{equation}\label{eq:Toda-1}
W\xrightarrow{f}X\xrightarrow{g}Y\xrightarrow{h}Z
\end{equation}
be a sequence of maps (of based spaces or spectra)
and assume that~$gf$ and~$hg$ are null homotopic.
The mapping sequence for~$g$ extends to a commutative
diagram of solid arrows
\begin{equation}\label{eq:Toda-2}
\xymatrix{
&& \Sigma W\ar[dr]^{\Sigma f}\ar@{-->}[d]_{f_\flat} & \\
X\ar[r]^g & Y\ar[r]^j\ar[dr]_h & C(g)\ar[r]^k\ar@{-->}[d]^{h^\sharp}
  & \Sigma X\ar[r]^{\Sigma g} & \Sigma Y \\
&& Z & \\
}
\end{equation}
and the composition $h^\sharp f_\flat\:\Sigma W\to Z$
represents the Toda bracket $\langle f,g,h\rangle$.
Of course this element is not necessarily well
defined up to homotopy: the choices in $f_\flat$
and $h^\sharp$ contribute indeterminacy subgroups
$h_*[\Sigma W,Y]$ and $(\Sigma f)^*[\Sigma X,Y]$
and when $W$ is a suspension or a spectrum
\[
\indet\langle f,g,h\rangle =
   h_*[\Sigma W,Y]+(\Sigma f)^*[\Sigma X,Y]
\]
and
\[
\langle f,g,h\rangle =
   h^\sharp f_\flat
     + h_*[\Sigma W,Y]+(\Sigma f)^*[\Sigma X,Y]
\]
for some given choice of $f_\flat$ and $h^\sharp$.

Here are some important examples of such
Toda brackets in the stable homotopy groups
of spheres $\pi_*(S)$ where $S=S^0_{(2)}$
is the $2$-local sphere spectrum. As usual,
we identify $\theta\in\pi_n(S)$ with
$\Sigma^k\theta\in\pi_{n+k}(\Sigma^kS)\iso\pi_{n+k}(S^k)$.
\begin{subequations}\label{eq:Todabrackets}
\begin{align}
\langle2,\eta,2\rangle &= \{\eta^2\},
\label{eq:Todabrackets-eta} \\
\langle\eta,\nu,\eta\rangle &= \{\nu^2\},
\label{eq:Todabrackets-nu} \\
\langle\nu,\sigma,\nu\rangle &= \{\sigma^2\}.
\label{eq:Todabrackets-sigma}
\end{align}
\end{subequations}
Of course these elements $\theta_1=\eta^2$,
$\theta_2=\nu^2$ and $\theta_3=\sigma^2$
are the first elements of Kervaire
invariant~$1$.

\begin{proof}[Proof/justification]
Using the Peterson-Stein formula~\cite{Mosher&Tangora}
or calculations with Massey products
in $\Ext_{\StA}$, it is straightforward
to see that these brackets contain the
stated elements; alternatively
see~\cite{Toda}*{corollary~3.7}. Also,
\begin{align*}
\indet\langle2,\eta,2\rangle &= 2\pi_1(S) = 0, \\
\indet\langle\eta,\nu,\eta\rangle &= \eta\pi_5(S) = 0,
\end{align*}
since $\pi_1(S)\iso\Z_2$ and $\pi_5(S) = 0$.
To see that $\indet\langle\nu,\sigma,\nu\rangle=0$,
we need to consider the Adams spectral
sequence in degree~$14$; a part of the
$\mathrm{E}_2$-term is shown in
Figure~\ref{fig:ASS-S0} and although
there is an element
$h_2\mathrm{P}h_2\in\mathrm{E}_2^{6,20}$
this is killed by the differential~$d_3$,
so $\nu\pi_{11}(S)=0$.
\end{proof}

\section{Constructing Joker spectra}
\label{sec:Constructing}

The main idea for this construction was
explained to us by Peter Eccles, and it
also appears in the unpublished Oxford
PhD thesis of Mike Hopkins~\cite{MJH:OxfordPhD}
(see section~1.7). We will make use of
the well-known Toda bracket
$\langle2,\eta,2\rangle=\{\eta^2\}\subseteq\pi_2(S^0)$.

Let $2^\sharp\:C(\eta)\to S^0$ extend~$2$
on the bottom cell; there is no indeterminacy
in this choice because the non-trivial
element $\eta^2\in\pi_2(S^0)$ is in the
image of the map $\pi_2(S^1)\to\pi_2(S^0)$
induced by $\eta\:S^2\to S^1$ on domains.
Let $2_\flat\:S^2\to C(\eta)$ be the
coextension of the degree~$2$ map onto the
top cell; again there is no indeterminacy
in this choice since the non-trivial element
$\eta^2\in\pi_2(S^0)$ is in the image of
the map $\pi_2(S^1)\to\pi_2(S^0)$ induced
by $\eta\:S^1\to S^0$ on codomains.
\[
\xymatrix{
&&
\ar@/_8pt/@{-->}[dll]_{\eta}\ar@/_8pt/@{-->}[dl]^{\eta^2}
S^2\ar[dr]^2\ar[d]^{2_\flat}
&& \\
S^1\ar[r]_\eta & S^0\ar[dr]_2\ar[r]
&\ar[r] C(\eta)\ar[d]_{2^\sharp}
& \ar@/^8pt/@{-->}[dl]_{\eta^2}S^2\ar[r]^\eta
& S^1\ar@/^8pt/@{-->}[dll]^{\eta}\\
&& S^0 & &
}
\]
By definition of $\langle2,\eta,2\rangle$,
the composition $\eta^2-2^\sharp\circ2_\flat$
in the following diagram is null homotopic.
\[
\xymatrix{
S^2 \ar[rr]_(.4){\eta\vee 2_\flat}\ar@/^16pt/[rrrr]^{\eta^2-2^\sharp\circ2_\flat}
&& S^1\vee C(\eta)\ar[rr]_(.6){\eta\vee(-2^\sharp)}
&& S^0 \\
}
\]
The mapping sequences for $\eta\vee 2^\sharp$
and $\eta\vee 2_\flat$ together yield the
following diagram of solid and dashed arrows.

\[
\xymatrix{
 &&& &&& &&
*+[o][F]{3}\ar@/_10pt/@{.>}[lld]_{\exists\theta}
\ar@{-->}[dr]_{2_\flat}
\ar@/^45pt/@{-->}[ddr]^{\eta}
& &&&  \\
&&& &&&*+[o][F]{3}\ar@/_20pt/@{-}[dd]_{\eta}
&&&*+[o][F]{3}\ar@/_20pt/@{-}[dd]_{\eta} &&&  \\
*+[o][F]{2}\ar@/_20pt/@{-}[dd]_{\eta}
&&& &&&*+[o][F]{2}\ar@/^20pt/@{-}[dd]^{\eta}
&&&*+[o][F]{2}&&&  \\
*+[o][F]{1}&&& &&&*+[o][F]{1}\ar@{-}[d]_{2}
&\ar[r]&&*+[o][F]{1}&\ar[r]^(.6){\eta\vee(-2^\sharp)}&&*+[o][F]{1}  \\
*+[o][F]{0}&\ar[r]^(.6){\eta\vee(-2^\sharp)}&&*+[o][F]{0}
&\ar[r]&&*+[o][F]{0} &&& &&& \\
}
\]

\medskip
\noindent
This shows the existence of a map $\theta$
which is well-defined up to indeterminacy
which lies in the image of
\[
\pi_3(S^0)/[\eta\pi_3(S^1)+2\pi_3(S^0)]
= \pi_3(S^0)/2\pi_3(S^0) \iso \Z_2,
\]
so there are two choices of such a map
$\theta$ up to homotopy. Because of the
$\eta$ component on the $2$-sphere, the
mapping cone of $\theta$ has the form
\[
\xymatrix{
&*+[o][F]{4}\ar@/^20pt/@{-}[dd]^{\eta}\ar@{-}[d]_{2}& \\
&*+[o][F]{3}\ar@/_20pt/@{-}[dd]_{\eta}& \\
&*+[o][F]{2}\ar@/^20pt/@{-}[dd]^{\eta}& \\
&*+[o][F]{1}\ar@{-}[d]_{2}& \\
&*+[o][F]{0}&
}
\]
whose cohomology is the Joker $\StA(1)$-module.
The $\StA$-module structure has a $\Sq^4$
action between degrees~$0$ and~$4$ and this
could be zero or non-zero. Each of these
possibilities can occur, depending on which
of the two of choices for~$\theta$ is made.
Putting all this together with the algebraic
identity~\eqref{eq:Jokerduality} we obtain
the following.

\begin{thm}\label{thm:Jokerspectra}
There are two equivalence classes of
finite $2$-local CW spectra $J_0$ and
$J_1$ whose cohomology realise the
$\StA$-modules\/ $\Joker_0$
and\/ $\Joker_1$. Up to suspension,
$J_0$ and $J_1$ are Spanier-Whitehead
dual, i.e.,
\[
DJ_0 \simeq \Sigma^{-4}J_1.
\]
\end{thm}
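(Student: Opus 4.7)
The preceding construction already does most of the work; the remaining task is to package it and to verify the duality. Let $X$ denote the mapping cone of $\eta\vee(-2^\sharp)\colon S^1\vee C(\eta)\to S^0$, which has cells in degrees $0,1,2,3$, and define $J_0$ and $J_1$ to be the mapping cones of the two homotopy classes of $\theta\colon S^3\to X$ produced in the diagram above. By construction each has cells in degrees $0,1,2,3,4$, and the $\Sq^1$ and $\Sq^2$ actions coming from the $2$ and $\eta$ attaching maps exhibit $H^*(J_i)$ as the Joker $\StA(1)^*$-module.

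The crucial step is to distinguish the two possible $\StA^*$-module extensions, i.e., to read off the $\Sq^4$ between the cells in degrees $0$ and $4$. The indeterminacy in $\theta$ was computed above to be $\Z/2$, generated by $\nu\in\pi_3(S^0)$, and this generator alters only the bottom-cell component of $\theta$. Composing $\theta$ with the collapse $X\to X/X^{(3)}\simeq S^0$ onto the bottom cell picks out precisely this $\nu$-summand, which then appears in $C(\theta)$ as the attaching map of the top $4$-cell to the bottom $0$-cell. By Adams's Hopf invariant one theorem, $\Sq^4$ acts nontrivially on the stable cofiber of $\nu$, so exactly one of $J_0,J_1$ has nontrivial $\Sq^4$ between its cells in degrees $0$ and $4$; label them so that $H^*(J_0)\iso\Joker_0$ and $H^*(J_1)\iso\Joker_1$. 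Carrying out this Hopf-invariant comparison is the main technical obstacle, as it requires tracking the $\nu$-component through the iterated cofibre sequence rather than in a single cofibre.

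For the duality claim, observe that $H^*(DJ_0)\iso \mathrm{D}\Joker_0\iso \Joker_1[-4]$ by the algebraic identity~\eqref{eq:Jokerduality}, so $DJ_0$ and $\Sigma^{-4}J_1$ have isomorphic cohomology as $\StA^*$-modules. To upgrade this to a stable equivalence, apply the preceding existence/uniqueness argument to $\Sigma^4DJ_0$: it is a finite $2$-local spectrum realising $\Joker_1$, and the construction shows that any such spectrum is determined up to equivalence by its Toda-bracket attaching data, which in turn is prescribed by the $\StA(1)^*$-module structure together with the action of $\Sq^4$. Hence $\Sigma^4DJ_0\simeq J_1$, giving $DJ_0\simeq \Sigma^{-4}J_1$.
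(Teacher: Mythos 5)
Your overall route is the same as the paper's: form $X=C(\eta\vee(-2^\sharp))$, use the Toda bracket $\langle2,\eta,2\rangle$ to produce $\theta\:S^3\to X$ with indeterminacy $\pi_3(S^0)/2\pi_3(S^0)\iso\Z_2$, take the two cofibres, and feed the result into \eqref{eq:Jokerduality}. The step where you go beyond the paper's brief assertion that ``each of these possibilities can occur'' is, however, not correct as written. There is no ``collapse $X\to X/X^{(3)}\simeq S^0$ onto the bottom cell'': since $X$ has cells in degrees $0,1,2,3$ we have $X^{(3)}=X$, so that quotient is a point, and no retraction of $X$ onto its bottom cell can exist because $\Sq^1$ and $\Sq^2$ act nontrivially out of degree $0$; consequently ``the $\nu$-summand of $\theta$'' and ``the attaching map of the $4$-cell to the $0$-cell'' are not well-defined notions. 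What is true is that the two admissible choices differ by the composite of $\nu\in\pi_3(S^0)$ with the bottom-cell inclusion $S^0\to X$, and to convert that into the $\Sq^4$ statement you need an argument such as: $\Sq^4$ on the bottom class of $C(\theta)$ is computed by the functional operation $\Sq^4_\theta$, whose indeterminacy vanishes here (the composite of $\theta$ with the collapse of $X$ onto its top $3$-cell has even degree), and which is additive in $\theta$; changing $\theta$ by $\iota\nu$ therefore changes the value by $\Sq^4$ in $C(\nu)$, which is nonzero because $\nu$ has Hopf invariant one. Note this last fact is the classical Hopf-invariant computation for $\nu$, not Adams's nonexistence theorem, which you cite and which plays no r\^ole here.

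For the duality you invoke a uniqueness principle --- ``any such spectrum is determined up to equivalence by its Toda-bracket attaching data'' --- but the construction does not prove this: it only classifies the cofibres of the particular maps $\theta$ it produces, not arbitrary five-cell spectra realising $\Joker_1$, so you cannot conclude $\Sigma^4DJ_0\simeq J_1$ without a separate cell-by-cell (or Adams spectral sequence) uniqueness argument, which you have not supplied. The paper sidesteps this: having realised $\Joker_0$ by $J_0$, one may simply take $J_1$, up to suspension, to be $\Sigma^4DJ_0$, whose cohomology is $\mathrm{D}\Joker_0[4]\iso\Joker_1$ by \eqref{eq:Jokerduality}, so the Spanier--Whitehead statement holds by construction, while the second choice of $\theta$ shows the same recipe also realises the other module. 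If you insist on defining $J_1$ as the cofibre of the other $\theta$ and then identifying it with the dual of $J_0$, you must either prove the uniqueness claim or rearrange the argument as the paper implicitly does.
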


Low dimensional portions of the Adams
$\mathrm{E}_2$-terms for such Joker spectra
are shown in Figures~\ref{fig:J0}
and~\ref{fig:J1}; there are no differentials
in these regions and up to degree~$12$ they
differ only by an~$h_0$ multiplication in
the $6$-column.

Here is a useful consequence of the existence
of such Joker spectra; we assume this was
known to Mark Mahowald but have not been
able to locate an explicit statement on the
existence of Joker spectra in his published
work -- however, see Remark~\ref{rem:A(1)spectrum}
and also~\cite{MJH:OxfordPhD}*{section~1.7}.
\begin{cor}\label{cor:Jokerspectra}
The $1$-connected cover of $\kO$ satisfies
\[
\kO\wedge\Sigma^2J_0
   \sim \kO\langle2\rangle
   \sim \kO\wedge\Sigma^2J_1.
\]
\end{cor}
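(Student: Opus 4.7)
The plan is to construct an explicit $\kO$-module map $\kO \wedge \Sigma^2 J_i \to \kO\langle 2\rangle$ for each $i = 0, 1$, and to verify it is an equivalence by comparing mod-$2$ cohomology. A preliminary observation explains why both $J_0$ and $J_1$ yield the same answer: as $\StA(1)^*$-modules, $\Joker_0$ and $\Joker_1$ both restrict to the single module $\Joker$ since they differ only in the $\Sq^4$ action, which lies outside $\StA(1)^*$. Consequently, by the standard change-of-rings isomorphism applied to $H^*(\kO) \iso \StA^* \otimes_{\StA(1)^*} \F_2$,
\[
H^*(\kO \wedge \Sigma^2 J_i) \iso \StA^* \otimes_{\StA(1)^*} \Sigma^2 \Joker
\]
as $\StA^*$-modules, independently of $i$.

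To produce the map itself, begin with the generator $\iota : S^2 \to \kO\langle 2\rangle$ of $\pi_2(\kO\langle 2\rangle) = \pi_2(\kO) = \Z_2$ and extend it over the cells of $\Sigma^2 J_i$ (in degrees $2$ through $6$) by obstruction theory. The attaching maps of the Joker correspond to multiplication by $2$ (where $\Sq^1$ acts) and to the Hopf element $\eta$ (where $\Sq^2$ acts); the obstructions lie in $\pi_k(\kO\langle 2\rangle)$ for $2 \leq k \leq 6$. Using $\pi_2(\kO) = \Z_2$, $\pi_3(\kO) = 0$, $\pi_4(\kO) = \Z$, $\pi_5(\kO) = \pi_6(\kO) = 0$ together with the identities $2\eta^2 = 0$ and $\eta^3 = 0$ in $\pi_*(\kO)$, each obstruction vanishes and $\iota$ extends to a map $f_i : \Sigma^2 J_i \to \kO\langle 2\rangle$. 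Using the $\kO$-module structure on $\kO\langle 2\rangle$, this promotes to the desired $\kO$-linear map
\[
\bar f_i : \kO \wedge \Sigma^2 J_i \xrightarrow{\id \wedge f_i} \kO \wedge \kO\langle 2\rangle \to \kO\langle 2\rangle.
\]

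To conclude that $\bar f_i$ is an equivalence, note that both source and target are bounded-below $2$-local spectra of finite type, so a mod-$2$ cohomology isomorphism suffices. The source cohomology is computed above, while $H^*(\kO\langle 2\rangle)$ may be calculated from the fibre sequence $\kO\langle 2\rangle \to \kO \to \kO_{\leq 1}$, and a direct inspection matches it with $\StA^* \otimes_{\StA(1)^*} \Sigma^2 \Joker$ as an $\StA^*$-module, with $\Sq^1$ and $\Sq^2$ acting as required. The main obstacle is this last cohomological identification, which requires careful bookkeeping of Steenrod actions through the Postnikov sections of $\kO$; by contrast, the obstruction-theoretic construction of $f_i$ is routine given the sparsity of $\pi_*(\kO)$ in the relevant range.
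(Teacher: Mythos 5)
Your route is sound in outline and is genuinely more self-contained than the paper's: the paper offers no construction at all, deducing the corollary from the fact that $\Joker_0$ and $\Joker_1$ restrict to the common $\StA(1)$-module $\Joker$ together with the quoted descriptions of the Whitehead tower, $\kO\langle r\rangle\sim\kO\wedge W_r$ (Mahowald, Mahowald--Milgram), where $H^*(W_2)$ is exactly the Joker. Your change-of-rings identification of $H^*(\kO\wedge\Sigma^2J_i)$ with $\StA\otimes_{\StA(1)}\Joker[2]$, and the closing principle that a mod~$2$ cohomology isomorphism between bounded-below $2$-local spectra of finite type is an equivalence, are both correct; the identification of $H^*(\kO\langle2\rangle)$ with $\StA\otimes_{\StA(1)}\Joker[2]$ that you defer to is the standard Stong-type computation implicit in the references the paper cites, so your argument amounts to an explicit proof of what the paper takes from the literature.

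There is, however, a gap in the obstruction-theoretic construction of $f_i$ as you justify it. The obstruction to extending over the $5$-cell of $\Sigma^2J_i$ lies in $\pi_4(\kO\langle2\rangle)\iso\Z_{(2)}$, which is not zero, and neither $2\eta^2=0$ nor $\eta^3=0$ bears on it: that cell is attached through $\eta$ to the $3$-cell, not by a composite of Hopf maps on the bottom cell, so listing the homotopy groups of $\kO$ does not make this obstruction vanish. It does vanish, but you must say why: the $\Sq^1$-pattern of the Joker forces $H_2(J_i;\Z_{(2)})\iso\Z_{(2)}$ and $H_3(J_i;\Z_{(2)})\iso\Z_2$, so the cellular boundary from the $3$-cell to the $2$-cell is zero; hence the attaching map of the $5$-cell of $\Sigma^2 J_i$ has trivial rational Hurewicz image, is a torsion class in the homotopy of the $4$-skeleton, and therefore maps to zero in the torsion-free group $\pi_4(\kO\langle2\rangle)$. (One should also observe that this obstruction is independent of the choices made on the lower cells, using $\pi_3(\kO)=0$ and the vanishing of the same cellular boundary.) A smaller omission: an abstract isomorphism of the two cohomologies does not yet make $\bar f_i$ an equivalence; you need the induced map $\bar f_i^*$ to be an isomorphism. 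This is easy to add: $\StA\otimes_{\StA(1)}\Joker[2]$ is cyclic over $\StA$ on its degree~$2$ class, and that class lies in the image because the generator of $\pi_2(\kO\langle2\rangle)\iso\Z_2$ has non-trivial mod~$2$ Hurewicz image ($\kO\langle2\rangle$ being $1$-connected), so $\bar f_i^*$ is surjective and hence, the graded dimensions being equal, an isomorphism.
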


It is well known that the other spectra
which appear in the Whitehead tower of
$\kO$ can all be defined in terms of
$\kO$-module spectra of the form $\kO\wedge W$
where the $\StA(1)^*$-module $H^*(W)$
has one of the following forms.
\[
\xymatrix@C=0.5cm@R=0.5cm{
& H^*(W_0) & \\
&\bullet&
}
\quad
\xymatrix@C=0.5cm@R=0.5cm{
& H^*(W_1) & \\
&\bullet\ar@/^15pt/@{-}[dd] & \\
&& \\
&\bullet\ar@{-}[d]& \\
&\bullet &
}
\quad
\xymatrix@C=0.5cm@R=0.5cm{
& H^*(W_2) & \\
&\bullet\ar@/^15pt/@{-}[dd]\ar@{-}[d] & \\
&\bullet\ar@/_15pt/@{-}[dd] &\\
&\bullet\ar@/^15pt/@{-}[dd]& \\
&\bullet\ar@{-}[d] & \\
&\bullet&
}
\quad
\xymatrix@C=0.5cm@R=0.5cm{
& H^*(W_4) & \\
&\bullet \ar@{-}[d]& \\
&\bullet\ar@/^15pt/@{-}[dd] & \\
&& \\
&\bullet&
}
\]
In general, when $r=0,1,2,4$ and $m\geq0$,
\[
\kO\langle8m+r\rangle \sim \kO\wedge\Sigma^{8m}W_r.
\]
For more details see~\cites{MM:bo-res,MM&RJM:Sq4}.
\begin{rem}\label{rem:A(1)spectrum}
A spectrum whose cohomology agrees with $\StA(1)$
as an $\StA(1)$-module (referred to as a `space'
in~\cite{MM&RJM:Sq4}*{remark~1.6}) can be
constructed using our Joker spectra. Starting
with $J$ to be either of $J_0$ or $J_1$ we see
in Figures~\ref{fig:J0} and~\ref{fig:J1} that
there is a generator~$u$ of $\pi_2(J)\iso\Z_{(2)}$
in Adams filtration~$1$ (this is a manifestation
of $v_1$ and has degree~$2$ on the $2$-cell).
Also, $\eta u=0$ so~$u$ extends to a map
$S^2\cup_\eta e^4\to J$. As $\pi_4(J)=0$, this
also extends to a map
$f\:S^2\cup_\eta e^4\cup_2 e^3\to J$ where the
attaching map of the $3$-cell is yet another
avatar of~$v_1$. The cohomology of the mapping
cone~$C(f)$ has basis elements in the same
degrees as $\StA(1)$ and all but one Steenrod
operation (indicated by the dashed line below)
are clear from the above description.
\[
\xymatrix@C=0.5cm@R=0.5cm{
&& \circ & \\
&& \circ\ar@{.}[u]^{\Sq^1} & \\
&& \bullet\ar@{--}@/_15pt/[uu]_{?} & \\
\circ\ar@{.}@/^8pt/[rruu]^{\Sq^2}&&\bullet\ar@{-}[u]& \\
\bullet\ar@{.}[u]\ar@{-}@/^8pt/[rruu]&&& \\
\bullet\ar@{-}@/^8pt/[rruu]&&& \\
\bullet\ar@{-}@/^15pt/[uu]\ar@{-}[u]&&& \\
}
\]
The relation $\Sq^2\Sq^2 = \Sq^1\Sq^2\Sq^1$
shows that this is indeed a $\Sq^2$ and
therefore as $\StA(1)$-modules,
$H^*(C(f))\iso\StA(1)$.

Of course, the action of $\StA$ on $H^*(J)$
extends to one on $H^*(C(f))$, thus giving
at least two different $\StA$-module structures
on~$\StA(1)$.
In~\cite{DMD-MM:v1v2periodicity}*{theorem~1.4},
Davis \& Mahowald gave a different construction
realising all four of the possible $\StA$-module
structures known to exist.
\end{rem}

For small $n$ we can realise $\Joker(n)_0$
and $\Joker(n)_1$ as the cohomology of
spectra.
\begin{thm}\label{thm:Jokerspectra24}
For $n=2,3$ there are finite $2$-local
CW~spectra $J(n)_0$ and $J(n)_1$ whose
cohomology restricts to $\StA(n)$-modules
isomorphic to $\Joker(n)$. These realise
the two $\StA$-module structures extending
the two dual $\StA(n)$-module structures.
Up to suspension, $J(n)_1$ can be taken
to be the Spanier-Whitehead dual of~$J(n)_0$.
\end{thm}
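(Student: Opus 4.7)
The plan is to imitate the construction of $J_0$ and $J_1$ from Section~\ref{sec:Constructing}, with the Hopf invariant~$1$ pair $(2,\eta)$ replaced by $(\eta,\nu)$ when $n=2$ and by $(\nu,\sigma)$ when $n=3$. This replacement is dictated by the cell structure of $\Joker(n)$: the short attaching maps (between adjacent non-zero degrees) are detected by the first Hopf element of the pair, the longer ones by the second, and the extended $\Sq^{2^{n+1}}$ action between bottom and top is detected by the square of the second. The essential algebraic inputs are the Toda brackets $\langle\eta,\nu,\eta\rangle=\{\nu^2\}$ and $\langle\nu,\sigma,\nu\rangle=\{\sigma^2\}$ recorded in~\eqref{eq:Todabrackets-nu} and~\eqref{eq:Todabrackets-sigma}; both have trivial indeterminacy, as established in Section~\ref{sec:TodaBrackets}.

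For $n=2$ I would first form the cofibre $C(\nu)$, with cells in degrees $0$ and~$4$. The relation $\eta\nu=0$ yields an extension $\eta^\sharp$ of $\eta$ over $C(\nu)$ and a coextension $\eta_\flat$ of $\eta$ from an appropriate sphere into $C(\nu)$; each is uniquely defined for the same indeterminacy-vanishing reason given in Section~\ref{sec:Constructing} for $2^\sharp$ and~$2_\flat$. Since $\langle\eta,\nu,\eta\rangle=\{\nu^2\}$, the composite $\eta^\sharp\circ\eta_\flat$ equals $\nu^2$, and the mapping sequences for $\eta\vee\eta^\sharp$ and $\eta\vee\eta_\flat$ produce, exactly as in Section~\ref{sec:Constructing}, a map $\theta$ attaching a top cell in degree~$8$; its mapping cone is a five-cell spectrum in degrees $0,2,4,6,8$ realising $\Joker(2)$ as an $\StA(2)$-module. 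The case $n=3$ is parallel with $(\nu,\sigma)$, $C(\sigma)$, $\nu^\sharp$, $\nu_\flat$ and a top cell in degree~$16$. In each case the ambiguity in $\theta$ modulo the image of the two previous stages is a copy of $\Z/2$, and the two choices are distinguished by whether $\Sq^{2^{n+1}}$ acts non-trivially between the extremal cells of the mapping cone, so both $J(n)_0$ and $J(n)_1$ arise. Spanier-Whitehead duality $DJ(n)_0\simeq\Sigma^{-2^{n+1}}J(n)_1$ then follows from the algebraic identity $\Joker(n)_1\cong\mathrm{D}\Joker(n)_0[2^{n+1}]$, exactly as in Theorem~\ref{thm:Jokerspectra}.

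The main obstacle is the final indeterminacy calculation: one must verify that the quotient of $\pi_{2^{n+1}-1}(S^0)$ by the image of the previous two stages is exactly $\Z/2$, and that the surviving class really does toggle the long Steenrod operation on the mapping cone. For $n=3$ this is subtler than for $n=1,2$ because $\Sq^{16}$ is decomposable in $\StA$ by Adams' Hopf invariant~$1$ theorem, so one must check that the required $\Sq^{16}$ action arises from the combinatorics of the attaching maps via the Adem relations rather than from a primary Hopf-invariant-$1$ obstruction; this is a finite check in a small range of the Adams spectral sequence, but it is where the bulk of the effort lies.
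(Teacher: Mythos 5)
Your construction is essentially the paper's: form the four-cell complex $C(\nu\vee(-\eta^\sharp))$ (resp.\ $C(\sigma\vee(-\nu^\sharp))$) and attach a top cell along a lift $\theta$ whose existence is guaranteed by $\langle\eta,\nu,\eta\rangle=\{\nu^2\}$ (resp.\ $\langle\nu,\sigma,\nu\rangle=\{\sigma^2\}$), and this much is fine. The gap is in how you propose to get \emph{both} $\StA$-module structures. First, the ambiguity in $\theta$ is not $\Z_2$: the relevant indeterminacy is $\pi_7(S^0)$, resp.\ $\pi_{15}(S^0)$, modulo a trivial subgroup (the paper records $\Z_8$ and $\Z_{32}$). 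More seriously, changing $\theta$ by $\iota\circ\alpha$ with $\alpha\in\pi_{2^{n+1}-1}(S^0)$ and $\iota$ the bottom-cell inclusion alters the $\Sq^{2^{n+1}}$-action between the extremal cells of the mapping cone exactly by the mod~$2$ Hopf invariant of $\alpha$ (the extension class of $\tilde{H}^*$ of the cone is additive in the attaching map, and for $\iota\alpha$ it is pushed forward from $H^*(C(\alpha))$). For $n=2$ this toggle is available (add $\sigma$), but for $n=3$ Adams' theorem says no element of $\pi_{15}(S^0)$ has Hopf invariant one, so \emph{no} choice of $\theta''$ changes the $\Sq^{16}$-action: the cofibre construction yields only one of $\Joker(3)_0$, $\Joker(3)_1$, whichever it is. Your fallback check cannot close this and rests on a misstatement: $\Sq^{16}$ is \emph{not} decomposable in $\StA$ (every $\Sq^{2^k}$ is indecomposable); Adams decomposes it via secondary operations only on classes killed by the lower $\Sq^{2^r}$, which is not the situation of the bottom class here. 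Indeed the very existence of two distinct extensions $\Joker(n)_0\neq\Joker(n)_1$ shows the $\Sq^{2^{n+1}}$-action is not forced by Adem relations or by the $\StA(n)$-structure.

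The way the paper secures the second structure is the final clause of the theorem: build one spectrum $J(n)$ by the cofibre argument and \emph{take} the other to be its Spanier--Whitehead dual, suitably suspended; since $\Joker(n)_1\iso\mathrm{D}\Joker(n)_0[2^{n+1}]$ and both restrict to $\Joker(n)$ over $\StA(n)$, the dual automatically realises the other $\StA$-module structure. Note that your duality sentence runs backwards: if $J(n)_0$ and $J(n)_1$ were constructed independently, $DJ(n)_0\simeq\Sigma^{-2^{n+1}}J(n)_1$ would not \emph{follow} from the algebraic identity (cohomology does not determine the spectrum, and no uniqueness is claimed for $n=2,3$); rather, defining $J(n)_1$ as the shifted dual is what makes the statement true, which is exactly what the wording ``can be taken to be'' signals.
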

\begin{proof}
The basic approach of Section~\ref{sec:Constructing}
also works using the Toda brackets $\langle\eta,\nu,\eta\rangle$
and $\langle\nu,\sigma,\nu\rangle$ given
in~\eqref{eq:Todabrackets}.

\subsection*{The case $n=2$:}
Using ideas and notation from Section~\ref{sec:TodaBrackets}
we can form a map
\[
S^3\vee C(\Sigma\nu)\xrightarrow{\nu\vee(-\eta^\sharp)}S^0
\]
whose mapping cone fits into a cofibre
sequence
\[
S^0 \to C(\nu\vee(-\eta^\sharp))
\to S^4\vee C(\Sigma^2\nu)\to S^1.
\]
The map
\[
S^7
\xrightarrow{\;\Sigma^4\nu\vee(\Sigma^5\eta)_\flat\;}
S^4\vee C(\Sigma^2\nu)
\]
projects to
\[
(\Sigma\nu^2-\eta^\sharp(\Sigma^5\eta)_\flat)\:S^7 \to S^1
\]
which is null homotopic as
$\langle\eta,\nu,\eta\rangle=\{\nu^2\}$.
Therefore $\Sigma^4\nu\vee(\Sigma^5\eta)_\flat$
factors through a map
\[
\theta'\: S^7 \to C(\nu\vee(-\eta^\sharp))
\]
whose mapping cone has the following form.
\[
\xymatrix{
&*+[o][F]{8}\ar@/^20pt/@{-}[dd]^{\nu}\ar@{-}[d]_{\eta}& \\
&*+[o][F]{6}\ar@/_20pt/@{-}[dd]_{\nu}& \\
&*+[o][F]{4}\ar@/^20pt/@{-}[dd]^{\nu}& \\
&*+[o][F]{2}\ar@/_10pt/@{-}[d]_{\eta}& \\
&*+[o][F]{0}&
}
\]
The indeterminacy in $\theta'$ is
\[
\pi_7(S^0)/[\nu\pi_4(S^0) + \eta\pi_6(S^0)]
= \pi_7(S^0)/\{0\} \iso \Z_8.
\]

\subsection*{The case $n=3$:} A similar
argument works and we obtain the desired
spectrum as the mapping cone of a map
\[
\theta''\: S^{15} \to C(\sigma\vee(-\nu^\sharp)).
\]
The indeterminacy in $\theta''$ is
\[
\pi_{15}(S^0)/[\sigma\pi_8(S^0) + \nu\pi_{12}(S^0)]
= \pi_{15}(S^0)/\{0\} \iso \Z_{32}.
\qedhere
\]
\end{proof}

Parts of the $\Ext_{\StA}$ charts for the
cases~$n=2,3$ are shown in
%Figures~\ref{fig:J(2)0} and~\ref{fig:J(2)1}.
Figures~\ref{fig:J(2)0}, \ref{fig:J(2)1},
\ref{fig:J(3)0} and~\ref{fig:J(3)1}.

\begin{rem}\label{rem:DA(1)spectrum}
In similar fashion to the construction of
a realisation of~$\StA(1)$ described in
Remark~\ref{rem:A(1)spectrum}, we can use
either of the spectra $J(2)_0$ or $J(2)_1$
to build a spectrum whose cohomology realises
the double $\Delta\StA(1)$. In~\cite{AM:H*tmf},
such a spectrum is denoted $D\StA(1)$, but
this clashes with standard notation for
Spanier-Whitehead duals so we avoid using
it here. We sketch the details, making use
of the information in Figures~\ref{fig:J(2)0}
and~\ref{fig:J(2)1}.

Choose $J(2)$ to be either $J(2)_0$ or $J(2)_1$.
We start with a map $S^5\to J(2)$ realising
the generator of $\pi_5(J(2))\iso\Z_2$
(this has in Adams filtration~$1$). Since
$\nu\pi_5(J(2))=\{0\}$, this extends to
a map $S^5\cup_\nu e^9\to J(2)$. As
$\pi_{10}(J(2))=\{0\}$ there is an extension
to a map
$g\:S^5\cup_\nu e^9\cup_\eta e^{11}\to J(2)$
and the cohomology of its mapping cone
has the following form where short/long
lines indicate $\Sq^2$/$\Sq^4$ actions.
\[
\xymatrix@C=0.5cm@R=0.5cm{
&& \circ & \\
&& \circ\ar@/^8pt/@{.}[u]^{\Sq^2} & \\
&& \bullet\ar@{--}@/_15pt/[uu]_{?} & \\
\circ\ar@{.}@/^8pt/[rruu]^{\Sq^4}&&\bullet\ar@/_8pt/@{-}[u]& \\
\bullet\ar@/^8pt/@{.}[u]\ar@{-}@/^8pt/[rruu]&&& \\
\bullet\ar@{-}@/^8pt/[rruu]&&& \\
\bullet\ar@{-}@/^15pt/[uu]\ar@/_8pt/@{-}[u]&&& \\
}
\]
Using the type (B) Wall relation~\cite{CTCW:A-gens}
we see that $\Sq^4\Sq^4 + \Sq^2\Sq^4\Sq^2$
acts trivially on $\Joker(2)$ and so the
dashed line must be a non-trivial $\Sq^4$
action. Therefore $H^*(C(g))\iso\Delta\StA(1)$
as $\StA(2)$-modules. Of course there are
two possible $\StA$ actions depending on
which choice of $J(2)$ we make giving
different $\Sq^{16}$ actions.
\end{rem}

An attempt at a direct analogue of the
preceding argument for the next case runs
into difficulties as $\pi_{18}(J(3))\neq\{0\}$
when $J(3)$ is either of~$J(3)_0$ or~$J(3)_1$
(see Figures~\ref{fig:J(3)0} and~\ref{fig:J(3)1}).

\bigskip
In the other direction we have some
non-existence results.
\begin{thm}\label{thm:NoJokers}
For $n\geq4$, there is no finite $2$-local
CW~spectrum whose cohomology restricts to an
$\StA(n)$-module isomorphic to\/ $\Joker(n)$.
\end{thm}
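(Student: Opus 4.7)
The plan is to argue by contradiction. Suppose $X$ is a finite $2$-local CW spectrum with $H^*(X;\F_2)\cong\Joker(n)$ as an $\StA(n)$-module for some $n\geq4$, and apply Adams' secondary-operation analysis of $\Sq^{2^n}$ (valid for $n\geq4$) to the cohomology class $u=x_{2^{n-1}}\in H^{2^{n-1}}(X)$, just above the bottom cell.

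I first verify that $\Sq^{2^i}u=0$ for every $0\leq i<n$, so that the domain of Adams' secondary operation $\Phi_n$ associated to his factorisation of $\Sq^{2^n}$ includes $u$. For $0\leq i\leq n-2$ the degree $2^{n-1}+2^i$ lies strictly between $2^{n-1}$ and $2^n$ and hence is not among the cell degrees $0,2^{n-1},2^n,3\cdot 2^{n-1},2^{n+1}$ of $\Joker(n)$, so $H^{2^{n-1}+2^i}(X)=0$ and $\Sq^{2^i}u=0$ trivially; and for $i=n-1$ the action $\Sq^{2^{n-1}}x_{2^{n-1}}$ corresponds under iterated doubling to $\Sq^1x_1=0$ in the Joker itself.

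Adams' theorem then forces $\Sq^{2^n}u$ into the indeterminacy of $\Phi_n$, which is a sum of terms of the form $a_{n,i}\cdot H^{2^{n-1}+2^i}(X)$ with $a_{n,i}\in\StA^{2^n-2^i}$ drawn from the precise form of Adams' relation. By the cell-degree analysis just above, only the summand with $i=n-1$ is potentially non-zero; it is $a_{n,n-1}\cdot\F_2\cdot x_{2^n}$ for some $a_{n,n-1}\in\StA^{2^{n-1}}$. But $\Sq^j x_{2^n}=0$ for every $0<j\leq 2^{n-1}$ in $\Joker(n)$ (by degree for $0<j<2^{n-1}$, and by $\Sq^1 x_2=0$ in the Joker for $j=2^{n-1}$). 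So writing any element of $\StA^{2^{n-1}}$ in the admissible basis and observing that the innermost Steenrod square of each admissible monomial of degree $2^{n-1}$ is some $\Sq^j$ with $0<j\leq 2^{n-1}$, we see $\StA^{2^{n-1}}\cdot x_{2^n}=0$. The indeterminacy therefore vanishes, forcing $\Sq^{2^n}u=0$ in $H^{3\cdot 2^{n-1}}(X)$; this contradicts the non-vanishing action $\Sq^{2^n}x_{2^{n-1}}=x_{3\cdot 2^{n-1}}$ (coming from $\Sq^2 x_1=x_3$ in the Joker).

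The main obstacle is invoking Adams' precise factorisation in the required form, so that the indeterminacy computation pins $\Phi_n(u)$ to zero; once this is available, the remainder is a transparent check using the sparse cell structure of $\Joker(n)$. For $n\geq5$ a much simpler argument is available: the bottom two cells of $X$ form a subcomplex $S^0\cup_\alpha e^{2^{n-1}}$ on which $\Sq^{2^{n-1}}$ acts non-trivially, producing an element of Hopf invariant one in dimension $2^{n-1}\geq 16$ and contradicting Adams' theorem outright. The argument above treats all $n\geq4$ uniformly.
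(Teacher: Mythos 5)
Your argument is correct and is essentially the paper's own proof: it applies Adams' precise factorisation (Theorem~\ref{thm:JFA}) with $k=n-1$ to the class in degree $2^{n-1}$, verifies the vanishing hypotheses, and uses the sparseness of $\Joker(n)$ together with $\StA^{2^{n-1}}\,x_{2^n}=0$ to annihilate every term of the sum and the indeterminacy, exactly as in the paper (which likewise notes your Hopf invariant~$1$ shortcut for $n\geq5$). One bookkeeping slip: Adams' secondary operations $\Phi_{i,j}$ have degrees $2^i+2^j-1$ with companion primary operations $\alpha_{i,j,n-1}$ of degree $2^n-2^i-2^j+1\geq1$, not the degrees $2^i$ and $2^n-2^i$ you state; but since the only nonzero cohomology of $\Joker(n)$ strictly between degrees $2^{n-1}$ and $3\cdot2^{n-1}$ is in degree $2^n$, the only potentially surviving contribution is still $\StA^{2^{n-1}}\cdot H^{2^n}$, and your conclusion is unaffected.
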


When $n\geq5$ such a spectrum would violate
Adams' Hopf invariant~$1$ theorem because
of the large gap between the two elements
of lowest degrees. However, in all cases
we can use the precise statement of the
following crucial result on the factorisation
of primary operations. Here~$X$ is a
connective spectrum and we explain the
notation after the statement.
\begin{thm}
[Adams~\cite{JFA:HopfInvt1}*{theorem~4.6.1}]
\label{thm:JFA}
Let $k\geq3$ and suppose that $u\in H^m(X)$
for $m>0$ satisfies $\Sq^{2^r}u=0$
for $0\leq r\leq k$. Then
\[
\Sq^{2^{k+1}}u \equiv
\sum_{\substack{0\leq i\leq j\leq k \\ j\neq i+1}}
\alpha_{i,j,k}\Phi_{i,j}u
\pmod{\mathrm{indeterminacy}}.
\]
\end{thm}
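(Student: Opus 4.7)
Suppose for contradiction that $X$ is a finite $2$-local CW spectrum with $H^*(X)\iso\Joker(n)$ as an $\StA$-module, for some $n\geq 4$. Label its cohomology generators $u_0,u_1,u_2,u_3,u_4$ in degrees $0,\,2^{n-1},\,2^n,\,3\cdot 2^{n-1},\,2^{n+1}$, respectively. Because $\Joker(n)=\Delta^{n-1}\Joker$ lies in the image of iterated doubling, $\Sq^t$ vanishes on every generator whenever $2^{n-1}\nmid t$.

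The plan is to pick a class $u\in H^m(X)$ satisfying $\Sq^{2^r}u=0$ for $0\leq r\leq k$ with $k\geq 3$ while $\Sq^{2^{k+1}}u\neq 0$, and to derive a contradiction via Theorem~\ref{thm:JFA}. For $n\geq 5$ I would take $u=u_0$ and $k=n-2$: the cellular gap below degree $2^{n-1}$ forces $\Sq^{2^r}u_0=0$ for $r\leq n-2$, while $\Sq^{2^{n-1}}u_0=u_1\neq 0$. For $n=4$ this choice gives only $k=2$, below the threshold of the theorem, so I would instead take $u=u_1\in H^8(X)$ with $k=3$: the operations $\Sq^{2^r}u_1$ for $r=0,1,2$ vanish for parity/doubling reasons, $\Sq^8u_1$ vanishes because $\Sq^1 x_1=0$ in the original Joker, while $\Sq^{16}u_1$ corresponds to $\Sq^2 x_1 = x_3$ under $\Delta^3$ and so equals $u_3\neq 0$.

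Applying Theorem~\ref{thm:JFA} in either case yields
\[
\Sq^{2^{k+1}}u \;\equiv\; \sum_{\substack{0\leq i\leq j\leq k \\ j\neq i+1}}\alpha_{i,j,k}\,\Phi_{i,j}u \pmod{\mathrm{indeterminacy}}
\]
with target the one-dimensional cohomology group generated by $u_1$ (in the first case) or $u_3$ (in the second). I would next verify that the indeterminacy subgroup $\sum_{i\leq k}\Sq^{2^i}H^{m+2^{k+1}-2^i}(X)$ is zero: in the first case this is automatic from the cellular gap, and in the second case the only potentially non-trivial term is $\Sq^8H^{16}(X)$, which vanishes because $\Sq^8 u_2$ corresponds to $\Sq^1 x_2=0$. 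Thus the congruence above is an exact equality with non-zero left-hand side.

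The main obstacle is the final step: showing that every term $\Phi_{i,j}u$ on the right is zero, contradicting the non-vanishing of the left. Each $\Phi_{i,j}u$ is a stable secondary cohomology operation whose value on $X$ can be computed as a functional/secondary Hopf invariant of an attaching map in the relevant skeleton. For $n\geq 5$ this reduces to Adams' classical obstruction: no element of $\pi_{2^{n-1}-1}(S^0)$ has Hopf invariant one, and Adams' original analysis of the $\Phi_{i,j}$ forces the aggregate to vanish. For $n=4$ one must analyse the seven admissible pairs $(i,j)\in\{(0,0),(0,2),(0,3),(1,1),(1,3),(2,2),(3,3)\}$ case by case. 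Each $\Phi_{i,j}u_1$ is detected by a Toda bracket in $\pi_*(S^0)$ involving the Hopf elements $\eta,\nu,\sigma$ from Section~\ref{sec:TodaBrackets}, evaluated in total degree $24-2^i$ or $24-2^j$; a direct check of these brackets against the known low-degree structure of $\pi_*(S^0)$ shows each $\Phi_{i,j}u_1$ lies in the (trivial) indeterminacy, yielding the required contradiction and concluding the proof for all $n\geq 4$.
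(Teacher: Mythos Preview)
Your proposal is aimed at Theorem~\ref{thm:NoJokers} (non-realisability for $n\geq4$), using Theorem~\ref{thm:JFA} as input; the latter is quoted from Adams and not proved in the paper, so I compare against the paper's proof of the former.

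The paper argues uniformly: for \emph{every} $n\geq4$ it takes $u=u_1$ in degree $2^{n-1}$ with $k=n-1$, then checks by inspection of degrees that every term $\alpha_{i,j,k}\Phi_{i,j}u$ and every indeterminacy contribution vanishes, forcing $\Sq^{2^n}u_1=0$ in contradiction with $\Sq^{2^n}u_1=u_3$. Your case split is acceptable in principle (and the paper itself remarks that the bottom-cell argument suffices for $n\geq5$), though using $u_0$ in degree $m=0$ literally violates the hypothesis $m>0$ of Theorem~\ref{thm:JFA}; suspend once.

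The genuine gap is your last paragraph. For $n=4$ you set things up exactly as the paper does, but then propose to evaluate the seven $\Phi_{i,j}u_1$ as ``Toda brackets in $\pi_*(S^0)$''. This is both unnecessary and not how these secondary operations are handled: each $\Phi_{i,j}u_1$ is simply a class in $H^{8+2^i+2^j-1}(X)$, and the argument is pure degree-counting in the module $\Joker(4)$, not a homotopy computation. Six of the seven admissible pairs land in degrees where $H^*(X)=0$ (the nonzero degrees being $0,8,16,24,32$); the only exception is $(i,j)=(0,3)$, landing in degree $16$. But then $\alpha_{0,3,3}$ has degree $8$, and on a module in the image of $\Delta^{(3)}$ any degree-$8$ element of $\StA$ acts through $\Sq^8$, which on $u_2$ corresponds to $\Sq^1x_2=\Sq^3x_0=0$ in the original Joker. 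The same observation disposes of the indeterminacy (whose form, incidentally, is the one in~\eqref{eq:Adamsindeterminacy}, not the $\sum_i\Sq^{2^i}H^*$ you wrote). No input from $\pi_*(S^0)$ is required beyond Adams' theorem itself.
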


In this result, the secondary operation
$\Phi_{i,j}$ has degree $2^i+2^j-1$, and
the primary operation $\alpha_{i,j,k}\in\StA$
has degree $2^{k+1}-2^i-2^j+1$. The
indeterminacy is the sum of the indeterminacies
of all the $\Phi_{i,j}$ appearing and
has form
\begin{equation}\label{eq:Adamsindeterminacy}
\sum_{\substack{0\leq i\leq j\leq k \\ j\neq i+1}}
\alpha_{i,j,k}Q^*(X;i,j)
\end{equation}
for certain subgroups $Q^*(X;i,j)\subseteq H^*(X)$.
Finally, for each pair $i,j$ occurring,
\begin{equation}\label{eq:Adamsindeterminacy-degalpha}
1\leq \deg\alpha_{i,j,k} \leq 2^{k+1}-1.
\end{equation}

\begin{proof}
[Proof of\/ \emph{Theorem~\ref{thm:NoJokers}}]
Let $n\geq4$ and suppose that a Joker spectrum
$J$ exists for this~$n$.
\[
\xymatrix{
& {}_{2^{n+1}\,}\bullet_{\,\ph{2^{n+1}}}
\ar@/_10pt/@{-}[d]_{\Sq^{2^{n-1}}}\ar@/^15pt/@{-}[dd]^{\Sq^{2^{n}}} & \\
& {}_{3\.2^{n-1}\,}\bullet_{\,\ph{3\.2^{n-1}}}
\ar@/_15pt/@{-}[dd]_{\Sq^{2^{n}}} & \\
& {}_{\ph{2^{n-1}}\,}\bullet_{\,2^{n-1}}
\ar@/^15pt/@{-}[dd]^{\Sq^{2^{n}}} & \\
& {}_{2^{n-1}\,}\bullet_{\,\ph{2^{n-1}}}
\ar@/_10pt/@{-}[d]_{\Sq^{2^{n-1}}} & \\
& {}_{0\,}\bullet_{\,\ph{0}} & \\
}
\]
Consider the non-zero element $u$ in
degree $2^{n-1}$. Taking $k=n-1$, we
can apply Theorem~\ref{thm:JFA}.
Carefully examining the possible terms
in the sum we find that they are all~$0$,
and similarly so is the indeterminacy.
The conclusion is that $\Sq^{2^n}u=0$,
contradicting the assumptions on~$J$.
\end{proof}

\section{Some unstable realisations}\label{sec:Unstable}

Now we turn to the question of unstable
realisations, i.e., as the cohomology of
a space. If~$X$ is a $2$-local space whose
cohomology $\tilde{H}^*(X)$ is isomorphic
to $\Joker^*[n]$ as an $\StA(1)$-module
then~$n\geq2$ since $\Sq^2$ acts non-trivially
on the bottom generator. Similarly, realising
the $\StA$-module $\Joker_1[n]$ unstably
requires that~$n\geq4$.
\begin{thm}\label{thm:Joker-Unstable}
There are finite $2$-local CW~complexes~$X_2$
and $X_4$ such that as\/ $\StA$-modules,
\[
\tilde{H}^*(X_2) \iso \Joker_0^*[2],
\quad
\tilde{H}^*(X_4) \iso \Joker_1^*[4].
\]
\end{thm}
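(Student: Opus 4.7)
The plan is to build each space cell-by-cell, imitating the stable construction of $J_0$ and $J_1$ from Section~\ref{sec:Constructing} but now within the unstable range. For $X_2$, take $S^2$ as bottom cell and attach a 3-cell by a degree-$2$ map to produce the Moore space $S^2\cup_2 e^3$, realising the $\Sq^1$ from degree~$2$ to~$3$. Attach a 4-cell by a map $S^3\to S^2\cup_2 e^3$ whose composition with the collapse to $S^2$ is~$\eta$, realising the $\Sq^2$ from degree~$2$ to~$4$. Continue through degrees~$5$ and~$6$ using unstable analogues of the maps $2_\flat$, $2^\sharp$ and $\theta$ appearing in Section~\ref{sec:Constructing}; the two homotopy classes of~$\theta$ differ by an element of a small unstable homotopy group, and one choice makes $\Sq^4$ vanish between the extremal cells, realising $\Joker_0^*[2]$.

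For $X_4$ the construction begins with $S^4$ in place of $S^2$. The nontrivial $\Sq^4$ from bottom to top in $\Joker_1^*[4]$ is precisely the Hopf invariant of~$\nu$, so the subcomplex $S^4\cup_\nu e^8=\HP^2$ appears naturally; the cells in degrees~$5$,~$6$,~$7$ are attached in the same cellular pattern as for $X_2$. Since we are now one doubling closer to the stable range, the required unstable attaching maps are easier to locate in $\pi_*(S^4)$ through the relevant range. The Spanier-Whitehead duality $DJ_0\simeq\Sigma^{-4}J_1$ from Theorem~\ref{thm:Jokerspectra} provides a stable consistency check that the resulting complex does realise $\Joker_1^*[4]$.

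An alternative presentation, suggested by the introductory remarks, is to select a naturally occurring 2-local space whose cohomology already contains the desired shifted Joker as a subquotient, and then to trim unwanted higher cohomology by a map into a product of Eilenberg-Mac Lane spaces $K(\Z/2,n)$ detecting the classes one wishes to kill, defining $X_2$ or $X_4$ to be the homotopy fibre. Suitable host spaces may be found in low dimensions of the $\Omega$-spectrum of $\kO$ for~$X_2$ (consistent with $\kO\wedge\Sigma^2 J_0\sim\kO\langle2\rangle$ from Corollary~\ref{cor:Jokerspectra}) and of $\tmf$ or of a classifying space such as $B\Spin$ or $\HPi$ for~$X_4$.

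The main obstacle is verifying that each required attaching map exists in the relevant \emph{unstable} homotopy group and that the top-cell attaching map can be chosen on the nose so as to realise the prescribed $\Sq^4$-action between the extremal cells (trivial for $X_2$, nontrivial for~$X_4$). Because the bottom cells are in low degree one cannot simply appeal to the stable calculation; however the elements involved lie only slightly outside the stable range and can be located either by an EHP-type analysis of $\pi_*(S^2)$ and $\pi_*(S^4)$ in the low-dimensional range or, more elegantly, by inheriting them from a geometrically natural host space as in the second approach.
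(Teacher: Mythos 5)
Your first, cell-by-cell route has a genuine gap at $X_2$. With the bottom cell in dimension~$2$ the stable machinery you want to imitate is simply not available: $\pi_3(S^2)\iso\Z$ is torsion free, so $2\iota_2\circ\eta=4\eta\neq0$ and $\eta\circ2\iota_3=2\eta\neq0$. Consequently the Toda bracket $\langle2,\eta,2\rangle$ is not even defined at this level, and neither the extension $2^\sharp$ of the degree-$2$ map over $S^2\cup_\eta e^4$ nor the coextension $2_\flat$ exists; so ``unstable analogues of $2_\flat$, $2^\sharp$ and $\theta$'' cannot be continued through degrees $5$ and $6$ as claimed. (Also, for a bottom class in degree~$2$ the vanishing of $\Sq^4$ is automatic by instability, so that part of your discussion is vacuous; the real issue is producing the $\StA(1)$-pattern at all.) For $X_4$ the relevant groups, e.g.\ $\pi_7(S^4)$ and $\pi_8$ of the intermediate skeleta, are still outside the stable range, so existence of the attaching maps is again not automatic; you flag this as ``the main obstacle'' but never discharge it, and the Spanier--Whitehead duality check you invoke is purely stable information and says nothing about unstable existence.

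Your alternative strategy -- start from a space already containing the shifted Joker and kill the surplus cohomology by passing to the fibre of a map into an Eilenberg--Mac\,Lane space or a space of $\kO$ or $\tmf$ -- is indeed the method of the paper, but the proposal omits exactly the steps that carry the weight. For $X_2$ the paper does not use a space of the $\kO$-spectrum ($B\SO=\underline{\kO\langle2\rangle}_0$ has too much low-dimensional cohomology) but $B\SO(3)$, exhibits the Joker on $w_2$ via the Wu formula, takes the fibre of $w_2^3\:B\SO(3)\to K(\F_2,6)$ and then the $6$-skeleton of a minimal CW model. For $X_4$ none of your suggested hosts works as stated (for instance $\HPi$ is hopeless, since $\Sq^1$ and $\Sq^2$ vanish on its generator); the paper instead builds the whiskered complex $X'=(S^3\cup_{\eta_3}e^5\cup_2e^6)\wedge(S^1\cup_2e^2)$, whose Steenrod action is forced by the Cartan formula, and removes the whisker by a map $X'\to\underline{\kO}_7$ classifying $x_6y_1+x_5y_2$. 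The crux is the existence of that map, proved by an $\Ext_{\StA(1)}$ (Adams spectral sequence) computation producing a class in $\pi_1(\kO\wedge W'')$, together with the connectivity of the spaces $\underline{\kO}_m$ (Examples~\ref{examps:Conn-kOtmf}) to see that the fibre has the correct cohomology through degree~$8$ before taking a skeleton. Since neither the existence argument for the killing map nor the control of the fibre's cohomology appears in your proposal, it does not yet amount to a proof of either statement.
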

\begin{proof}
Corollary~\ref{cor:Jokerspectra} suggests
looking for an unstable realisation of
the Joker in the space
$\underline{\kO\langle2\rangle}_0=B\SO$.
However, the cohomology of this is too
large in low degrees, instead we look
at $B\SO(3)$.

Recall the Wu formula
\begin{equation}\label{eq:Wu}
\Sq^rw_m =
w_rw_m + \sum_{1\leq i\leq r}\binom{r-m}{i}w_{r-i}w_{m+i}.
\end{equation}
Using this, in $H^*(B\SO(3)) = \F_2[w_2,w_3]$
we obtain
\[
\Sq^1w_2 = w_3,
\quad
\Sq^2w_3 = 0.
\]
Thus we obtain a copy of the Joker in
the $\StA(1)$-module $H^*(B\SO(3))$.
\[
\xymatrix{
&*+[o][F]{w_3^2}\ar@/^20pt/@{-}[dd]^{}\ar@{-}[d]_{}& \\
&*+[o][F]{w_2w_3}\ar@/_20pt/@{-}[dd]_{\Sq^2}& \\
&*+[o][F]{w_2^2}\ar@/^20pt/@{-}[dd]^{}& \\
&*+[o][F]{w_3}\ar@{-}[d]_{\Sq^1}& \\
&*+[o][F]{w_2}&
}
\]
However, $H^6(B\SO(3))=\F_2\{w_3^2,w_2^3\}$,
so we next remove the additional generator
by considering the fibre of the map
classifying $w_2^3$,
\[
B\SO(3) \xrightarrow{w_2^3} K(\F_2,6)
\]
which we will denote by $B\SO(3)\{w_2^3\}$.
Calculating $H^*(B\SO(3)\{w_2^3\})$ using
the Serre spectral sequence or the
Eilenberg-Moore spectral sequence we find
that when~$k\leq 6$,
\[
H^k(B\SO(3)\{w_2^3\})
\iso (\F_2[w_2,w_3]/(w_2^3))^k,
\]
so taking the $6$-skeleton of a minimal
CW~realisation (in the sense
of~\cite{AJB&JPM}*{section~3} for example)
we obtain an isomorphism of $\StA(1)$-modules
\[
H^*(B\SO(3)\{w_2^3\}^{[6]}) \iso \Joker_0[2].
\]

To realise $\Joker_1[4]$, we start with
an unstable complex $S^3\cup_{\eta_3}e^5\cup_2e^6$
which exists since the suspension of the
Hopf map $S^3\ra S^2$ gives an element
$\eta_3\in\pi_4(S^3)$ of order~$2$,
see~\cite{Toda}*{chapter~V}. Smashing
with the Moore space $S^1\cup_2e^2$ we
obtain a CW~complex~$X'$ whose cohomology
as an $\StA(1)$-module realises the
$4$-fold suspensions of the 'whiskered
Joker' module
\[
\Joker'
= \StA(1)/\StA(1)\{\Sq^2\Sq^1\Sq^2\}
= \StA(1)/\StA(1)\{\Sq^2\Sq^3\}.
\quad
\xymatrix{
&\bullet\ar@/^20pt/@{-}[dd]^{}\ar@{-}[d]_{}& \\
&\bullet\ar@/_20pt/@{-}[dd]_{\Sq^2}&\bullet \\
&\bullet\ar@/^20pt/@{-}[dd]^{}\ar@{-}[ur]& \\
&\bullet\ar@{-}[d]_{\Sq^1}& \\
&\bullet&
}
\]
Labelling cells and cohomology generators
in the obvious way, $X'$ has the following
cell diagram.

\begin{equation}\label{eq:Joker'}
\xymatrix{
*+[o][F]{x_6} &&&&&*+[o][F]{x_6y_2}& \\
*+[o][F]{x_5}\ar@{-}[u]&\ar@{}[d]|(.5){\bigwedge}
&*+[o][F]{y_2}&\ar@{}[d]|(.5){\sim}&&*+[o][F]{x_5y_2}\ar@{-}[u]
&*+[o][F]{x_6y_1+x_5y_2} \\
&&*+[o][F]{y_1}\ar@{-}[u]&&&*+[o][F]{x_5y_1}\ar@{-}@/_25pt/[uu]\ar@{-}[ur]& \\
*+[o][F]{x_3}\ar@{-}@/^30pt/[uu]&&&&&*+[o][F]{x_3y_2}\ar@{-}@/^30pt/[uu]& \\
&&&&&*+[o][F]{x_3y_1}\ar@{-}[u]\ar@{-}@/_30pt/[uu]
\ar@{-}@/^45pt/[uuuu]^{\Sq^4}& \\
}
\end{equation}
Notice that as well as the $\Sq^1$ and
$\Sq^2$ actions we also have
$\Sq^4(x_3y_1)=x_6y_2$ so this agrees
with $\Joker'_1[4]$ as an $\StA$-module.

%\bigskip
%\hrule
%
%
%\bigskip
%The operation $\Sq^1$ factorises through
%the boundary homomorphism $\delta$ in the
%long exact sequence
%\[
%\cdots\ra H^k(X;\Z)\xrightarrow{2\cdot}
%H^k(X;\Z)\xrightarrow{\mathrm{red}}H^k(X)
%\xrightarrow{\delta}H^{k+1}(X;\Z)\ra\cdots
%\]
%associated with mod~$2$ reduction. Hence
%the element $x_6y_1+x_5y_2=\Sq^1(x_5y_1)\in H^7(X)$
%is in the image of the reduction homomorphism,
%so it is determined by a map $X\ra K(\Z,7)$
%with fibre $X'$. The two elements of lowest
%degree in
%\[
%H^*(K(\Z,7)) =
%\F_2[\Sq^Iz_7=\Sq^{(i_1,\ldots,i_\ell)}z_7 :
%\text{$I$ admissible, $\exc(I)<7$, $i_\ell\geq2$}]
%\]
%are $z_7$ and $\Sq^2z_7$, so up to degree~$8$
%the cohomology of $X'$ agrees with
%\[
%H^*(X)/\F_2\{x_6y_1+x_5y_2\} \iso \Joker[2].
%\]
%We have
%\[
%x_3^2 = \Sq^3x_3 = \Sq^1\Sq^2x_3 = x_6
%\]
%and
%\[
%y_1^2 = \Sq^1y_1 = y_2,
%\]
%and it follows that
%\[
%\Sq^4(x_3y_1) = \Sq^3x_3\Sq^1y_1 = x_6y_2.
%\]
%\hrule

We will begin by showing that there
is a factorisation
\[
\xymatrix{
X'\ar[r]\ar@/^15pt/[rr]
&\underline{\kO}_7\ar[r]
& K(\F_2,7)
}
\]
of the map classifying $x_6y_1+x_5y_2\in H^7(X')$.
We will do this by producing a map of
spectra $\Sigma^\infty X'\ra\Sigma^7\kO$
by dualising a map
\[
S^1\ra\kO\wedge\Sigma^8D\Sigma^\infty X'
\sim \kO\wedge W'',
\]
where $W''$ is a CW spectrum whose
cohomology realises the other whiskered
Joker $\StA(1)$-module $\Joker_1''$
shown in the following diagram.
\[
\xymatrix{
&& {}_{4\,}\bullet_{\,\ph{4}}
\ar@{-}[d]\ar@/^15pt/@{-}[dd] & \\
&& \bullet
\ar@/_15pt/@{-}[dd] &  \\
 && \bullet
\ar@/^15pt/@{-}[dd]^{\Sq^2} & \\
&& \bullet
\ar@{-}[d]_{\Sq^1} & \ar@{-}[ul]{}_{\,\ph{1}}\bullet_{1\,}\\
&& {}_{0\,}\bullet_{\,\ph{0}} & \\
}
\]
Since we are interested in elements
of $\pi_1(\kO\wedge W'')$, we need
to consider the $t-s=1$ column in
the Adams spectral sequence
\[
\mathrm{E}_2^{s,t}
= \Ext_{\StA}^{s,t}(H^*(\kO\wedge W''),\F_2)
\iso \Ext_{\StA(1)}^{s,t}(H^*(W''),\F_2)
\Lra \pi_{t-s}(\kO\wedge W'')
\]
and a portion of the $\mathrm{E}_2$-term
is shown in Figure~\ref{fig:DJ''}. As the
generator in $\mathrm{E}_2^{0,1}$ cannot
support a differential there is a non-trivial
element of $\pi_1(\kO\wedge W'')$ detected
in the zero line by the only
$\StA(1)$-indecomposable element of $H^1(W'')$.
Hence there is a dual element of $\kO^7(X')$
with the desired properties.

Now take a minimal CW~complex equivalent to
the fibre of the map $X'\ra\underline{\kO}_7$
and let~$X_4$ be its $8$-skeleton. By a
straightforward calculation with either of
the Serre or Eilenberg-Moore spectral sequences
and making use of the~$\kO$ results of
Examples~\ref{examps:Conn-kOtmf}, we find
that~$H^*(X)$ realises the $\StA$-module~$\Joker_1[4]$.
\end{proof}
\begin{thm}\label{thm:Joker(2)-Unstable}
There are finite $2$-local CW~complexes~$Y_4$
and $Y_8$ such that as\/ $\StA$-modules,
\[
\tilde{H}^*(Y_4) \iso \Joker(2)_0^*[4],
\quad
\tilde{H}^*(Y_8) \iso \Joker(2)_1^*[8].
\]
\end{thm}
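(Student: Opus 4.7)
The plan is to double the proof of Theorem~\ref{thm:Joker-Unstable} throughout: $\Sq^1\rightsquigarrow\Sq^2$, $\Sq^2\rightsquigarrow\Sq^4$, the Toda bracket $\langle 2,\eta,2\rangle=\{\eta^2\}$ is replaced by $\langle\eta,\nu,\eta\rangle=\{\nu^2\}$ from~\eqref{eq:Todabrackets-nu}, the space $B\SO(3)$ is replaced by its complex analogue $BSU(3)$, and the role played by~$\kO$ is taken over by~$\tmf$.

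For $Y_4$, I would start with $BSU(3)$, whose mod~$2$ cohomology is $\F_2[c_2,c_3]$ with $|c_2|=4$ and $|c_3|=6$. The Wu formula for Chern classes, the Cartan formula, and the instability condition $\Sq^{|x|}x=x^2$ together give
\[
\Sq^2 c_2 = c_3,\quad \Sq^4 c_2 = c_2^2,\quad \Sq^4 c_3 = c_2c_3,\quad \Sq^4(c_2^2)=c_3^2,\quad \Sq^2(c_2c_3)=c_3^2,
\]
so the $\StA(2)$-submodule generated by $\{c_2,c_3,c_2^2,c_2c_3,c_3^2\}$ is isomorphic to $\Joker(2)[4]$. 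In degrees $\leq 12$ the only extraneous class is $c_2^3\in H^{12}$, which I would kill by forming the fibre $BSU(3)\{c_2^3\}$ of the map $BSU(3)\to K(\F_2,12)$ classifying~$c_2^3$. A Serre (or Eilenberg-Moore) spectral sequence calculation identifies the cohomology of this fibre with $\F_2[c_2,c_3]/(c_2^3)$ in degrees $\leq 12$; the $12$-skeleton of a minimal CW~model then gives $Y_4$. The instability condition forces $\Sq^8 c_2=0$, so the resulting $\StA$-module is $\Joker(2)_0^*[4]$ rather than $\Joker(2)_1^*[4]$.

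For $Y_8$, the approach mimics the $X_4$-construction with $\kO$ replaced by $\tmf$. I would build an unstable complex $C=S^6\cup_{\nu_6}e^{10}\cup_{\eta_{10}}e^{12}$ using the unstable Hopf classes $\nu_6\in\pi_9(S^6)$ and $\eta_{10}\in\pi_{11}(S^{10})$ (the second cell attaches because $\eta\nu=0$ stably), then smash with $\CP^2=S^2\cup_\eta e^4$ to produce a complex $Y'$ whose $\F_2$-cohomology realises a whiskered form $\Joker(2)'$ of $\Joker(2)_1[8]$, by analogy with the cell diagram~\eqref{eq:Joker'}. To remove the whisker I would dualise a non-zero class in $\pi_1(\tmf\wedge W'')$, where $W''$ is the Spanier-Whitehead dual of $\Sigma^\infty Y'$ (suitably shifted), producing a map $Y'\to\underline{\tmf}_k$ classifying the $\StA(2)$-indecomposable whisker generator. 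The existence of such a class is extracted from the $\tmf$-module Adams spectral sequence
\[
\mathrm{E}_2^{s,t}=\Ext_{\StA(2)}^{s,t}(H^*(W''),\F_2)\Lra\pi_{t-s}(\tmf\wedge W''),
\]
combined with the connectivity results for $\tmf$-spaces in Examples~\ref{examps:Conn-kOtmf}. Taking the fibre of this map and an appropriate skeleton, followed by a Serre (or Eilenberg-Moore) spectral-sequence check, yields $Y_8$ with $\tilde H^*(Y_8)\iso\Joker(2)_1^*[8]$.

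The main obstacle is the $Y_8$-step: the $\tmf$-module Adams spectral sequence has much richer structure than its $\kO$-analogue, so verifying that the required zero-line generator is a permanent cycle, and controlling the indeterminacy in the whisker-killing map, will require a careful analysis of low-degree $\Ext_{\StA(2)}$ groups, likely with the aid of Bruner's \Sage{} programmes. A secondary point to check is that the unstable complex $C$ really supports the stated cell attachments and that, after smashing with $\CP^2$, its cohomology carries a non-trivial $\Sq^{16}$ extension between the bottom and top cells - the feature that selects $\Joker(2)_1^*[8]$ rather than $\Joker(2)_0^*[8]$.
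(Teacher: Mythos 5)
Your proposal is correct and follows the paper's own proof essentially step for step: $Y_4$ comes from $B\SU(3)$ by killing $c_2^3$ exactly as $X_2$ came from $B\SO(3)$, and $Y_8$ comes from smashing a three-cell complex built on $\nu$ and $\eta$ with a two-cell $\eta$-complex and then killing the degree-$14$ whisker by a map to $\underline{\tmf}_{14}$ whose existence is read off a computed $\Ext_{\StA(2)}$ chart together with the connectivity results for $\underline{\tmf}_m$ (the paper uses $(S^5\cup_{\nu_5}e^9\cup_{\eta}e^{11})\wedge(S^3\cup_{\eta_3}e^5)$ rather than your $(S^6\cup_{\nu_6}e^{10}\cup_{\eta}e^{12})\wedge\CP^2$, an immaterial variation). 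Two small corrections: with the Spanier--Whitehead dual normalised so the dual of the top cell sits in degree $0$, the dual of the whisker sits in degree $2$ and the relevant group is $\pi_2(\tmf\wedge Z'')$, not $\pi_1$; and the operation distinguishing $\Joker(2)_1[8]$ from $\Joker(2)_0[8]$ is $\Sq^8$ from degree $8$ to degree $16$, not $\Sq^{16}$ (which vanishes by instability), and its nontriviality on your smash complex is automatic, since $\Sq^8(c_6d_2)=\Sq^6c_6\.\Sq^2d_2=c_{12}d_4$ by the Cartan formula and the Adem relation $\Sq^6=\Sq^2\Sq^4+\Sq^5\Sq^1$.
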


\begin{proof}
A similar construction to that of $X_2$
starting with $B\SU(3)$ leads to an
unstable realisation of $\Joker(2)_0[4]$.

We will realise $\Joker(2)_1[8]$ using
a similar approach to that for~$X_4$.
By Toda~\cite{Toda}*{proposition~5.8},
$\pi_{10}(S^6)$ is trivial, so the
suspensions of the Hopf maps give
elements $\eta_9\in\pi_{10}(S^9)$ and
$\nu_6\in\pi_9(S^6)$ which satisfy
\[
0 = \nu_6\circ\eta_9\in\pi_{10}(S^6).
\]
Hence we can form
$S^5\cup_{\nu_5}e^9\cup_{\eta_9}e^{11}$
and $S^3\cup_{\eta_3}e^5$. By smashing
these together we obtain a CW~complex
\[
Y' =
(S^5\cup_{\nu_5}e^9\cup_{\eta_9}e^{11})
\wedge
(S^3\cup_{\eta_3}e^5)
\]
whose cohomology realises the $\StA$-module
with non-trivial $\Sq^8$-action and is
the $8$-fold suspension of the whiskered
double Joker cyclic $\StA(2)$-module
\[
\Joker(2)_1' =
\StA(2)/\StA(2)
\{\mathrm{P}_1^0,\mathrm{P}_1^1,\mathrm{P}_2^1,\Sq^4\Sq^6\}.
\]
\[
\xymatrix{
&& {}_{16\,}\bullet_{\,\ph{16}}
\ar@/_10pt/@{-}[d]\ar@/^15pt/@{-}[dd] & \\
&& \bullet
\ar@/_15pt/@{-}[dd] & \ar@/^10pt/@{-}[dl]{}_{\,\ph{14}}\bullet_{14\,} \\
 \Joker(2)_1'[8] && \bullet
\ar@/^15pt/@{-}[dd]^{\Sq^4} & \\
&& \bullet
\ar@/_10pt/@{-}[d]_{\Sq^{2}} & \\
&& {}_{8\,}\bullet_{\,\ph{0}} & \\
}
\]
We would like to define a map
$Y'\ra\underline{\tmf}_{14}$ so that
the cohomology class in $H^{14}(\underline{\tmf}_{14})$
carried on the bottom cell is mapped to $\Sq^2\Sq^4y_8$
by the induced homomorphism, where $y_8\in H^8(Y)$
is the generator. Such a map corresponds to a map
of spectra $\Sigma^\infty Y'\ra\Sigma^{14}\tmf$
or equivalently a map
\[
S^0\ra\Sigma^{14}(D\Sigma^\infty Y')\wedge\tmf
\sim \Sigma^{-2}Z''\wedge\tmf
\]
where $Z''$ is a CW spectrum whose cohomology
realises the other whiskered double Joker
$\StA(2)$-module $\Joker(2)_1''$ shown in
the following diagram.
\[
\xymatrix{
&& {}_{8\,}\bullet_{\,\ph{8}}
\ar@/_10pt/@{-}[d]\ar@/^15pt/@{-}[dd] & \\
&& \bullet
\ar@/_15pt/@{-}[dd] &  \\
 && \bullet
\ar@/^15pt/@{-}[dd]^{\Sq^4} & \\
&& \bullet
\ar@/_10pt/@{-}[d]_{\Sq^{2}} & \ar@/_10pt/@{-}[ul]{}_{\,\ph{2}}\bullet_{2\,}\\
&& {}_{0\,}\bullet_{\,\ph{0}} & \\
}
\]
Since we are interested in elements
of $\pi_2(\tmf\wedge Z'')$, we need
to consider the $t-s=2$ column in
the Adams spectral sequence
\[
\mathrm{E}_2^{s,t}
= \Ext_{\StA}^{s,t}(H^*(\tmf\wedge Z''),\F_2)
\iso \Ext_{\StA(2)}^{s,t}(H^*(Z''),\F_2)
\Lra \pi_{t-s}(\tmf\wedge Z'')
\]
and a portion of the $\mathrm{E}_2$-term
is shown in Figure~\ref{fig:DJ(2)''}.
As the generator in $\mathrm{E}_2^{0,2}$
cannot support a differential this
shows that there is a suitable element
of $\pi_2(\tmf\wedge Z'')$ and hence
of $\tmf^{14}(Y')$.

Now consider the fibre of the above map
$Y'\ra\underline{\tmf}_{14}$. By a spectral
sequence calculation and making use of
the~$\tmf$ results of Examples~\ref{examps:Conn-kOtmf}
we see that its cohomology agrees with
$\Joker(2)_1[8]$ up to degree~$21$. The
$16$-skeleton of a minimal CW~realisation
of this fibre is a CW complex~$Y_8$ whose
cohomology as an $\StA$-module agrees
with~$\Joker(2)_1[8]$.
\end{proof}

It is not clear how to realise $\Joker(3)_0[8]$
since there is no obvious analogue of
$B\SO(3)$ and $B\SU(3)$ which appears
relevant. Similarly, our argument for
$X_4$ and $Y_8$ has no obvious
generalisation since because of the
non-existence of suitable elements
of Hopf invariant~$1$ there is no
spectrum playing an analogous r\^ole
to~$\kO$ and~$\tmf$ in the last steps.

\bigskip
\section*{Concluding remarks}

The appearance of the elements of Kervaire
invariant~$1$ in our realisations of Joker
modules raises the question of whether there
other $\StA(n)$-modules which admit realisations
when $\theta_n$ exists, i.e., when $n=4$,~$5$
and possibly~$6$. In particular,
by~\cite{ZX:StrongKervaire62}*{theorem~5.2},
\[
\{\theta_4\} =
\langle2,\sigma^2+\kappa,2\sigma,\sigma\rangle,
\]
while older work of Barratt, Mahowald \& Tangora
and Kochman shows that
\[
\{\theta_4\}
= \langle2,\sigma^2,2,\sigma^2\rangle
= \langle2,\sigma^2,\sigma^2,2\rangle
= \langle2\sigma,\sigma,2\sigma,\sigma\rangle
= \langle2,\sigma^2,2\sigma,\sigma\rangle.
\]
These suggest the intriguing possibility
that appropriate constructions associated
with such $4$-fold Toda brackets might
lead to realisation results for some
interesting~$\StA(4)$-modules.

\newpage
\bigskip
\appendix
\section{Doubling}\label{app:Doubling}

Doubling for $\StA$ and $\StA(n)$ are
discussed in Margolis~\cite{HRM:Book}*{section~15.3}
(for a particularly relevant result on
modules see theorem~31). Rather than
follow Margolis directly, we will give
an account of doubling emphasising the
dual situation.

The dual of~$\StA(n)$ is the quotient
Hopf algebra
\begin{align*}
\StA(n)_*
&=
\StA_*/(\zeta_1^{2^{n+1}},\zeta_2^{2^{n}},
\ldots,\zeta_{n+1}^{2},\zeta_{n+2},\ldots) \\
&=
\F_2[\bar{\zeta_1},\bar{\zeta_2},\ldots,\bar{\zeta_{n+1}}]
/(\bar{\zeta_1}^{2^{n+1}},\bar{\zeta_2}^{2^{n}},\ldots,\bar{\zeta_{n+1}}^{2}),
\end{align*}
where $\bar{(-)}$ indicates residue class.
The dual of the normal exterior subHopf
algebra $\StE(n)\subseteq\StA(n)$ generated
by the Milnor primitives $\mathrm{P}^0_t$
($1\leq t \leq n+1$) is the quotient
exterior algebra
\[
\StE(n)_*
=
\StA_*/(\zeta_1^2,\zeta_2^2,\ldots,
\zeta_{n+1}^{2},\zeta_{n+2},\zeta_{n+3},\ldots)
\iso
\StA(n)_*/(\bar{\zeta_1}^2,\bar{\zeta_2}^2,\ldots,\bar{\zeta_{n+1}}^2).
\]
The dual of the quotient Hopf algebra
\[
\StA(n)/\!/\StE(n) = \StA(n)\otimes_{\StE(n)}\F_2
\iso \StA(n)/\StA(n)\StE(n)^+
\]
is
\[
\StA(n)_*\square_{\StE(n)_*}\F_2 =
\F_2[\bar{\zeta_1}^2,\bar{\zeta_2}^2,\ldots,\bar{\zeta_{n+1}}^2]
/(\bar{\zeta_1}^{2^{n+1}},\bar{\zeta_2}^{2^{n}},\ldots,\bar{\zeta_{n+1}}^{2})
\subseteq\StA(n)_*.
\]

The dual of doubling is the Frobenius
homomorphism.
\[
\xymatrix{
\StA(n)_*\ar[dr]_{\mathbf{f}}\ar[r]
 & \StA(n+1)_*\square_{\StE(n+1)_*}\F_2\ar[d] \\
& \StA(n+1)_* \\
\bar{\zeta_r}\ar@{|->}[r]& \bar{\zeta_r}^2
}
\]
There are obvious iterations of this, namely
$\mathbf{f}^{(k)}$ where $k\geq0$,
\[
\xymatrix{
\StA(n)_*\ar[dr]_{\mathbf{f}^{(k)}}\ar[r]
 & \StA(n+k)_*\square_{\StE(n+k)_*}\F_2\ar[d] \\
& \StA(n+k)_* \\
\bar{\zeta_r}\ar@{|->}[r]& \bar{\zeta_r}^{2^k}
}
\]
so that $\mathbf{f}^{(0)}=\mathbf{f}$. Each
$\mathbf{f}^{(k)}$ is clearly a Hopf algebra
homomorphism and dually there is a Verschiebung
homomorphism $\mathbf{v}^{(k)}\:\StA(n+k)\to\StA(n)$
which is also a Hopf algebra homomorphism.
\begin{equation}\label{eq:versch-k}
\xymatrix{
\StA(n)\ar@{<-}[dr]_{\mathbf{v}^{(k)}}\ar@{<-}[r]
 & \StA(n+k)/\!/\StE(n+k)\ar@{<-}[d] \\
& \StA(n+k)
}
\end{equation}
The effect of $\mathbf{v}^{(k)}$ is easily
seen using the Milnor basis dual to the
monomial basis in the elements
$\bar{\xi_r}=\chi\bar{\zeta_r}$. Since
\[
\mathbf{f}^{(k)}(\bar{\xi_1^{i_1}\cdots\xi_\ell^{i_\ell}})
=
\bar{\xi_1^{2^ki_1}\cdots\xi_\ell^{2^ki_\ell}}
\]
we have
\begin{equation}\label{eq:Verschiebung}
\mathbf{v}^{(k)}(\Sq(j_1,\ldots,j_\ell)) =
\begin{dcases*}
\Sq(j'_1,\ldots,j'_\ell)
& if $j_r=2^kj'_r$ for all $r$, \\
0 & otherwise.
\end{dcases*}
\end{equation}
Since $\Sq(j)=\Sq^j$,
\begin{equation}\label{eq:Verschiebung-Sq}
\mathbf{v}^{(k)}(\Sq^j) =
\begin{dcases*}
\Sq^{j'} & if $j=2^kj'$, \\
0 & otherwise.
\end{dcases*}
\end{equation}
Finally, the elements
\[
\mathbf{v}^{(k)}(\mathrm{P}^s_t) =
\Sq(\overset{t}{\overbrace{0,\ldots,0,2^s}})\in\StA(n)
  \quad(1\leq t\leq n+1)
\]
satisfy
\begin{equation}\label{eq:Verschiebung-P}
\mathbf{v}^{(k)}(\mathrm{P}^{s}_t) =
\begin{dcases*}
\mathrm{P}^{s-k}_t & if $s\geq k$, \\
0 & otherwise.
\end{dcases*}
\end{equation}

Let $\leftidx{_{\StA(n)}}{\mathcal{M}}{^{(r)}}$
be the full subcategory of $\leftidx{_{\StA(n)}}{\mathcal{M}}{}$
consisting of modules concentrated in degrees
divisible by~$2^r$, so for example
$\leftidx{_{\StA(n)}}{\mathcal{M}}{^{(0)}}
= \leftidx{_{\StA(n)}}{\mathcal{M}}{}$,
and
$\leftidx{_{\StA(n)}}{\mathcal{M}}{^{(1)}}
= \leftidx{_{\StA(n)}}{\mathcal{M}}{^\ev}$.
The Verschiebung $\mathbf{v}^{(k)}$ together
with the quotient homomorphism
$\pi\:\StA(n+k)\to\StA(n+k)/\!/\StE(n+k)$
induces restriction functors between categories
of left modules fitting into the following
commutative diagram.
\begin{equation}\label{eq:versch-kModules}
\xymatrix{
\leftidx{_{\StA(n)}}{\mathcal{M}}{}\ar[drr]_{\Delta^{(k)}} %@/_15pt/
\ar[rr]^(.4){(\mathbf{v}^{(k)})^*}
&&
\leftidx{_{\StA(n+k)/\!/\StE(n+k)}}{\mathcal{M}}{^{(k)}}\ar[d]^{\pi^*} \\
&& \leftidx{_{\StA(n+k)}}{\mathcal{M}}{^{(k)}} \\
}
\end{equation}
Here $\Delta^{(k)}$ multiples degrees by~$2^k$
and it is a monoidal functor since $\mathbf{v}^{(k)}$
and $\pi$ are both homomorphisms of Hopf
algebras.
\begin{rem*}
By~\cite{HRM:Book}*{theorem~15.3.31} $\Delta^{(1)}$
is an isomorphism of categories, but when $k>1$
this is not true. However, this can be corrected
by replacing $\StA(n+k)/\!/\StE(n+k)$ by the quotient
of $\StA(n+k)$ by the ideal generated by a larger
set of the elements~$\mathrm{P}^s_t$. Let
\[
\StE(n+k,k) =
\F_2(\mathrm{P}^s_t : 1\leq t \leq n+k+1,\; 0\leq s <k)
\subseteq \StA(n+k)
\]
and consider the normal quotient
\[
\StA(n+k)/\!/\StE(n+k,k) = \StA(n+k)/\StA(n+k)\StE(n+k,k)^+
\iso \StA(n+k)\otimes_{\StE(n+k,k)}\F_2.
\]
Then \eqref{eq:versch-kModules} can be replaced
by
\begin{equation}\label{eq:versch-kModules2}
\xymatrix{
\leftidx{_{\StA(n)}}{\mathcal{M}}{}\ar[drr]_{\Delta^{(k)}} %@/_15pt/
\ar[rr]^(.4){(\mathbf{v}^{(k)})^*}_(.4){\iso}
&&
\leftidx{_{\StA(n+k)/\!/\StE(n+k,k)}}{\mathcal{M}}{^{(k)}}\ar[d]^{\pi^*}_{\iso} \\
&& \leftidx{_{\StA(n+k)}}{\mathcal{M}}{^{(k)}}
}
\end{equation}
so that the proof of Margolis still applies
to show that this $\Delta^{(k)}$ is an
isomorphism of categories.

It seems worth remarking that $\Delta^{(k)}$
does not induce a functor on stable module
categories since it does not preserve
projective modules.
\end{rem*}

\section{Some connectivity results}\label{app:Connectivity}

Let $p$ be a prime and let $f\:X\to Y$ be a map
between two finite type $p$-local connective
spectra or spaces which are simply connected
or at least have abelian fundamental groups.

Recall that~$f$ is called an \emph{$n$-equivalence}
if $f_*\:\pi_k(X)\to\pi_k(Y)$ is an isomorphism
for $k<n$ and an epimorphism if $k=n$; this
is equivalent to the mapping cone~$C_f$ being
$n$-connected. It is well-known that the following
are also equivalent conditions:
\begin{itemize}
\item
$f_*\:H_k(X;\Z_{(p)})\to H_k(Y;\Z_{(p)})$
is an isomorphism if $k<n$ and an epimorphism
if $k=n$.
\item
$f_*\:H_k(X;\F_p)\to H_k(Y;\F_p)$ is an
isomorphism if $k<n$ and an epimorphism
if $k=n$.
\end{itemize}

The next result relates connectivity information
for spectra and their associated infinite loop
spaces. Although such results are undoubtedly
standard we are not aware of convenient references
and we use them to establish
Examples~\ref{examps:Conn-kOtmf}.

We will denote the $m$-th space in a spectrum~$X$
by $\underline{X}_m=\Omega^\infty\Sigma^mX$.
If  $f\:X\ra Y$ is a map of $(-1)$-connected
spectra then for each $m\geq0$ there is an
induced infinite loop map
$f_m\:\underline{X}_m\ra\underline{Y}_m$.
\begin{lem}\label{lem:Spectra-InfLoopSpce}
Let $f\:X\ra Y$ be an $n$-equivalence. Then
for each\/ $m\geq1$,
$f_m\:\underline{X}_m\ra\underline{Y}_m$
is an $m+n$-equivalence, hence
$(f_m)_*\:H_k(\underline{X}_m;\F_p)
    \to H_k(\underline{Y}_m;\F_p)$
is an isomorphism if $k<m+n$ and an epimorphism
if $k=m+n$.
\end{lem}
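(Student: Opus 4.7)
The plan is to use the cofibre sequence of spectra determined by $f$ and the fact that $\Omega^\infty$ carries spectrum cofibre sequences to fibre sequences of based spaces, and then read off homotopy connectivity from a long exact sequence.

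First, form the cofibre $C_f$ of $f\:X\to Y$ in spectra; since $f$ is an $n$-equivalence of $(-1)$-connected spectra, $C_f$ is $n$-connected, i.e.\ $\pi_k(C_f)=0$ for $k\leq n$. In spectra cofibre sequences are fibre sequences, so applying $\underline{(-)}_m = \Omega^\infty\Sigma^m$ to $X\xrightarrow{f}Y\to C_f$ yields a fibre sequence of based spaces
\[
\underline{X}_m \xrightarrow{f_m} \underline{Y}_m \to \underline{C_f}_m.
\]
Because $m\geq 1$ and $X$, $Y$, $C_f$ are $(-1)$-connected, each of these spaces is connected with abelian fundamental group (being an infinite loop space), so the long exact sequence of homotopy groups makes sense and the characterisations of $n$-equivalence recalled just before the lemma apply.

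Next I would compute the connectivity of the base $\underline{C_f}_m$. Since $C_f$ is $n$-connected,
\[
\pi_k(\underline{C_f}_m) = \pi_k(\Omega^\infty\Sigma^m C_f) = \pi_{k-m}(C_f) = 0
\quad\text{for } k-m\leq n,
\]
so $\underline{C_f}_m$ is $(m+n)$-connected. Feeding this into the long exact sequence
\[
\pi_{k+1}(\underline{C_f}_m)\to \pi_k(\underline{X}_m) \xrightarrow{(f_m)_*} \pi_k(\underline{Y}_m)\to \pi_k(\underline{C_f}_m)
\]
shows that $(f_m)_*$ is an isomorphism for $k<m+n$ (both flanking groups vanish) and is an epimorphism for $k=m+n$ (the right-hand group vanishes). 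Thus $f_m$ is an $(m+n)$-equivalence.

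The $\F_p$-homology conclusion is then immediate from the equivalence of the homotopy-theoretic and mod~$p$ homological versions of $n$-equivalence recalled at the start of this appendix. The only delicate point in the argument is the passage from the cofibre sequence of spectra to a fibre sequence of infinite loop spaces, which is standard but needs $m\geq 1$ so that all spaces involved are connected; once that is in hand the rest is a routine long exact sequence calculation.
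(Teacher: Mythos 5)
Your proof is correct. The paper actually states this lemma without proof (it is flagged as standard but lacking a convenient reference), so there is no argument of the paper to compare against; what you give is the natural one and every step checks: the cofibre $C_f$ is $n$-connected, $\Omega^\infty\Sigma^m$ carries the (co)fibre sequence of spectra to a fibre sequence of connected infinite loop spaces when $m\geq1$, the base $\underline{C_f}_m$ is $(m+n)$-connected, and the long exact sequence yields the homotopy statement; the homology statement then follows from the equivalence of the homotopy-theoretic and mod~$p$ homological formulations of $n$-equivalence recalled at the start of the appendix (the spaces involved are simple, finite type and $p$-local, so that equivalence applies -- and in any case the direction you need, homotopy connectivity implying homology connectivity, is just relative Hurewicz for simple spaces). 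One simplification worth noting: the homotopy part does not need the cofibre at all, since $\pi_k(\underline{X}_m)\iso\pi_{k-m}(X)$ naturally in $X$ for $k\geq0$, so $(f_m)_*$ on $\pi_k$ is literally $f_*$ on $\pi_{k-m}$, which is an isomorphism for $k-m<n$, an epimorphism for $k-m=n$, and trivially an isomorphism of zero groups for $k<m$. The only genuine content of the lemma is therefore the passage from homotopy to $\F_p$-homology connectivity, which your final paragraph handles correctly.
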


Here is a sample application; we only state
this for the prime~$2$, but a similar result
also holds for odd primes.
\begin{cor}\label{cor:Conn-Unit}
Take $p=2$ and let $X$ be a $(-1)$-connected
spectrum and suppose that $\pi_0(X)$ is
a cyclic $\Z_{(2)}$-module with generator
given by a map $j\:S^0\ra X$. If $j$ is
an $n$-equivalence then for each $m>n$,
and $m<k\leq m+n$,
\[
H_k(\underline{X}_m;\F_2) = 0.
\]
\end{cor}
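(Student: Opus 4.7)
The plan is to combine Lemma~\ref{lem:Spectra-InfLoopSpce}, applied to the unit map $j\:S^0\to X$, with the standard Kudo-Araki-Dyer-Lashof computation of the mod~$2$ homology of $\dlQ S^m$. Using the identification $\underline{S^0}_m=\Omega^\infty\Sigma^m S^0=\dlQ S^m$, the lemma will deliver an $(m+n)$-equivalence
\[
j_m\:\dlQ S^m\longrightarrow\underline{X}_m,
\]
so that $(j_m)_*\:H_k(\dlQ S^m;\F_2)\to H_k(\underline{X}_m;\F_2)$ is an isomorphism for $k<m+n$ and a surjection for $k=m+n$.

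Next I will invoke the Dyer-Lashof description: $\tilde H_*(\dlQ S^m;\F_2)$ is a polynomial algebra on admissible monomials $Q^I\iota_m$ of appropriate excess in the fundamental class $\iota_m\in H_m(\dlQ S^m;\F_2)$. Since $Q^i$ acts non-trivially on a class of degree~$m$ only when $i\geq m$, every non-fundamental generator has degree at least~$2m$, and the lowest-degree polynomial product $\iota_m^2$ also sits in degree $2m$. Consequently
\[
\tilde H_k(\dlQ S^m;\F_2)=0\quad\text{for}\quad m<k<2m.
\]

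To conclude, observe that the hypothesis $m>n$ is equivalent to the numerical inequality $m+n<2m$, which places the entire range $m<k\leq m+n$ strictly below~$2m$. The previous display then gives $H_k(\dlQ S^m;\F_2)=0$ for each such~$k$, and $(j_m)_*$ is either an isomorphism or a surjection out of the zero group onto $H_k(\underline{X}_m;\F_2)$, forcing the latter to vanish as required.

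There is essentially no obstruction: the argument amounts to putting these two ingredients together, so the only thing that needs genuine care is keeping track of the range conventions (strict versus non-strict inequalities in the equivalence and isomorphism degrees) and verifying the key numerical matching $m+n<2m$, which is exactly the content of the hypothesis $m>n$.
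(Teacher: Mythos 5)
Your argument is correct and is essentially the paper's own proof: apply Lemma~\ref{lem:Spectra-InfLoopSpce} to the unit $j$ to see that $(j_m)_*$ is an isomorphism below degree $m+n$ and onto in degree $m+n$, then use the Dyer--Lashof description of $H_*(\dlQ S^m;\F_2)$ to see that the source vanishes in degrees strictly between $m$ and $2m$, with $m+n<2m$ exactly because $m>n$. The bookkeeping of strict versus non-strict inequalities is handled exactly as in the paper, so nothing further is needed.
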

\begin{proof}
Recall that for $m\geq1$, the homology of
$\underline{S^0}_m=\dlQ S^m$ is given by
\[
H_*(\dlQ S^m;\F_2) =
\F_2[\dlQ^Ix_m : \text{$I$ admissible, $\exc(I)>m$}],
\]
where $x_m\in H_m(\dlQ S^m;\F_2)$. Thus
the three elements of lowest degree are
$x_m,x_m^2,\dlQ^{2m+1}x_m$ in degrees~$m$,
$2m>m+n$ and $2m+1>m+n$ respectively.

The infinite loop map $j_m$ induces an
algebra homomorphism $(j_m)_*$ over the
Dyer-Lashof algebra. By assumption on~$j$,
\[
(j_m)_*\:H_k(\dlQ S^m;\F_2)
             \ra H_k(\underline{X}_m;\F_2)
\]
is an isomorphism when $k<m+n$ and
an epimorphism when $k=m+n$.
\end{proof}

Thus the lowest degree element of
$H_*(\underline{X}_m;\F_2)$ not in
the image of $(j_m)_*$ occurs in some
degree~$k_0\geq m+n+1$ where $k_0-m$
is also the smallest degree for which
\[
\coker[j_*\:H_k(S^0;\F_2)\ra H_k(X;\F_2)]
\neq0.
\]
\begin{examps}\label{examps:Conn-kOtmf}
We set $H_*(-)=H_*(-;\F_2)$.

Recall that
\[
H_*(\kO)
= \F_2[\zeta_1^4,\zeta_2^2,\zeta_3,\ldots]\subseteq\StA_*,
\quad
H_*(\tmf)
= \F_2[\zeta_1^8,\zeta_2^4,\zeta_3^2,\zeta_4,\ldots]\subseteq\StA_*.
\]
Thus on $H_*(-)$ the units $j^\kO\:S\ra\kO$
and $j^\tmf\:S\ra\tmf$ induce homomorphisms
whose cokernels $\coker j^\kO_*$ and
$\coker j^\kO_*$ have non-zero elements
of lowest degrees~$4$ and~$8$ respectively.
Thus $j^\kO$ is a $3$-equivalence and
$j^\tmf$ is a $7$-equivalence.

For $4\leq m<k\leq m+3$,
\[
H_k(\underline{\kO}_m) = 0,
\]
while for $8\leq m<k\leq m+7$,
\[
H_k(\underline{\tmf}_m) = 0.
\]
\end{examps}

The results for $\kO$ can also be deduced from
work of Dena Cowen~Morton~\cite{DSCM:Hopfringbo}
but as far as we know the algebra structure of
the Hopf ring for~$\tmf$ has not been determined.

\section{Some $\Ext$ charts}\label{app:Charts}

These charts were produced using Bob Bruner's
programmes available at the following address.
\begin{center}
\href{http://www.math.wayne.edu/~rrb/cohom/index.html}
{http://www.math.wayne.edu/$\sim$rrb/cohom/index.html}
\end{center}

\begin{figure}[htb]
\centering

% [inline block 0: 9 envs, 179009 chars -> data_tex | \begin{tikzpicture}[scale=0.7] ...]

\caption{$\Ext_{\StA(2)}^{s,n+s}(\Joker(2)'',\F_2)$:
$0 \leq n \leq 8$ and $0 \leq s \leq 8$.}
\label{fig:DJ(2)''}
\end{figure}

\newpage
\begin{bibdiv}
\begin{biblist}

\bib{JFA:HopfInvt1}{article}{
   author={Adams, J. F.},
   title={On the non-existence of elements
   of Hopf invariant one},
   journal={Ann. of Math. (2)},
   volume={72},
   date={1960},
   pages={20\ndash104},
}

\bib{JFA&SBP}{article}{
   author={Adams, J. F.},
   author={Priddy, S. B.},
   title={Uniqueness of\/ $B\SO$},
   journal={Math. Proc. Cambridge
   Phil. Soc.},
   volume={80},
   date={1976},
   pages={475\ndash509},
}

\bib{AJB&JPM}{article}{
    author={Baker, A. J.},
    author={May, J. P.},
    title={Minimal atomic complexes},
    journal={Topology},
    volume={43},
    date={2004},
    pages={645\ndash665},
}

\bib{PB&NR:PicA2}{article}{
   author={Bhattacharya, P.},
   author={Ricka, N.},
   title={The stable Picard group of $\StA(2)$},
   date={2017},
   eprint={arXiv:1702.01493},
}

\bib{RRB&JPCG}{book}{
   author={Bruner, R. R.},
   author={Greenlees, J. P. C.},
   title={Connective real $K$-theory of finite
   groups},
   series={Mathematical Surveys and Monographs},
   volume={169},
   publisher={American Mathematical Society},
   date={2010},
}

%\bib{SemCartan:12}{article}{
%   author={Cartan, H.},
%   title={D\'emonstration homologique des
%   th\'eor\`emes de p\'eriodicit\'e de Bott, II.
%   Homologie et cohomologie des groupes classiques
%   et de leurs espaces homog\`enes},
%   journal={S\'eminaire Henri Cartan},
%   volume={12},
%   publisher={Numdam},
%   date={1959\ndash60},
%   pages={exp. no.~17, 1\ndash32},
%   eprint={http://www.numdam.org/article/SHC_1959-1960__12_2_A7_0.pdf},
%}

\bib{MC:FPMStA}{thesis}{
   author={Catanzaro, M.},
   title={Finitely Presented Modules over
   the Steenrod Algebra in Sage},
   type={MA thesis, Wayne State University},
   eprint={http://www.math.wayne.edu/~mike/mods/Essayfinal.pdf},
   date={2011},
}

\bib{JMC:Decomp}{article}{
   author={Cohen, J. M.},
   title={The decomposition of stable homotopy},
   journal={Ann. of Math. (2)},
   volume={87},
   date={1968},
   pages={305\ndash320},
}

\bib{DSCM:Hopfringbo}{article}{
   author={Cowen~Morton, D. S.},
   title={The Hopf ring for\/ $b\mathrm{o}$
   and its connective covers},
   journal={J. Pure Appl. Algebra},
   volume={210},
   date={2007},
   pages={219\ndash247},
}

\bib{DMD-MM:v1v2periodicity}{article}{
   author={Davis, D. M.},
   author={Mahowald, M.},
   title={$v_{1}$ and $v_{2}$-periodicity
   in stable homotopy theory},
   journal={Amer. J. Math.},
   volume={103},
   date={1981},
   pages={615\ndash659},
}

%%Menger-Nöbeling theorem
%\bib{RF&RP:CWembeddings}{article}{
%   author={Fritsch, R.},
%   author={Piccinini, R.},
%   title={CW-complexes and Euclidean spaces},
%   journal={Rend. Circ. Mat. Palermo (2) Suppl.},
%   number={24},
%   date={1990},
%   pages={79\ndash95},
%}

\bib{VG&DJP:MSpin}{article}{
   author={Giambalvo, V.},
   author={Pengelley, D. J.},
   title={The homology of\/ $M\Spin$},
   journal={Math. Proc. Camb. Philos. Soc.},
   volume={95},
   date={1984},
   number={3},
   pages={427\ndash436},
}

\bib{MJH:OxfordPhD}{thesis}{
   author={Hopkins, M. J.},
   title={Some Problems in Topology},
   type={PhD thesis, University of Oxford},
   date={1984},
}

\bib{MM:bo-res}{article}{
   author={Mahowald, M.},
   title={$b\mathrm{o}$-resolutions},
   journal={Pacific J. Math.},
   volume={92},
   date={1981},
   pages={365\ndash383},
}

\bib{MM&RJM:Sq4}{article}{
   author={Mahowald, M.},
   author={Milgram, R. J.},
   title={Operations which detect $\Sq^4$
   in connective $K$-theory and their
   applications},
   journal={Quart. J. Math. Oxford Ser. (2)},
   volume={27},
   date={1976},
   number={108},
   pages={415\ndash432},
}

\bib{HRM:Book}{book}{
   author={Margolis, H. R.},
   title={Spectra and the Steenrod algebra:
   Modules over the Steenrod algebra and
   the stable homotopy category},
   publisher={North-Holland},
   date={1983},
}

\bib{AM:H*tmf}{article}{
   author={Mathew, A.},
   title={The homology of tmf},
   journal={Homology Homotopy Appl.},
   volume={18},
   date={2016},
   pages={1\ndash29},
}

\bib{Mosher&Tangora}{book}{
   author={Mosher, R. E.},
   author={Tangora, M. C.},
   title={Cohomology operations and
   applications in homotopy theory},
   publisher={Harper \& Row},
   date={1968},
}

%% \bib{JHP:Quillen}{article}{
%%    author={Palmieri, J. H.},
%%    title={Quillen stratification for
%%    the Steenrod algebra},
%%    journal={Ann. of Math. (2)},
%%    volume={149},
%%    date={1999},
%%    pages={421\ndash449},
%% }
%
%\bib{FPP&NS:SecondaryCohomOps}{article}{
%   author={Peterson, F. P.},
%   author={Stein, N.},
%   title={Secondary cohomology operations:
%   two formulas},
%   journal={Amer. J. Math.},
%   volume={81},
%   date={1959},
%   pages={281\ndash305},
%}
%
%\bib{FPP&NS:DualSecondaryCohomOps}{article}{
%   author={Peterson, F. P.},
%   author={Stein, N.},
%   title={The dual of a secondary cohomology
%   operation},
%   journal={Ill. J. Math.},
%   volume={4},
%   date={1960},
%   pages={397\ndash404},
%}

\bib{Toda}{book}{
   author={Toda, H.},
   title={Composition methods in homotopy
   groups of spheres},
   series={Annals of Mathematics Studies},
   publisher={Princeton University Press},
   volume={49},
   date={1962},
}

\bib{CTCW:A-gens}{article}{
   author={Wall, C. T. C.},
   title={Generators and relations for
   the Steenrod algebra},
   journal={Annals Math.},
   volume={72},
   date={1960},
   pages={429\ndash444},
}

\bib{GW:Notes}{book}{
   author={Whitehead, G. W.},
   title={Recent advances in homotopy theory},
   note={Conference Board of the Mathematical
   Sciences Regional Conference
   Series in Mathematics, No. 5},
   publisher={Amer. Math. Soc.},
   date={1970},
}

\bib{ZX:StrongKervaire62}{article}{
   author={Xu, Z.},
   title={The strong Kervaire invariant
   problem in dimension~$62$},
   journal={Geom. Topol.},
   volume={20},
   date={2016},
   pages={1611\ndash1624},
}

\end{biblist}
\end{bibdiv}

\end{document}